\documentclass[preprint,11pt,authoryear]{elsarticle}
\usepackage{hyperref}
\journal{Stochastic Processes and their Applications}

\usepackage{exscale, extsizes,amsthm,amsmath, textcomp, syntonly, 
			amssymb, amsfonts, graphicx, amsthm, bbm, exscale, 
			textcomp, amsmath, syntonly, amssymb, amsfonts, 
			graphicx, color, amsthm, enumerate, cleveref,
			scrextend, longtable, natbib}
\usepackage{geometry}

\def\sign{\text{sign}}
\newtheorem{Theorem}{Theorem}[section]

\newtheorem{theorem}{Theorem}[section]
\newtheorem{lemma}[Theorem]{Lemma}

\newtheorem{proposition}[Theorem]{Proposition}
\newtheorem{definition}[Theorem]{Definition}

\def\Cov{{\text{Cov}}}

\def\udc{\lceil d \chi \rceil}
\def\:{:\,}

\def\E{{\mathbb E}}

\def\Z{{\mathbb Z}}

\def\real{{\mathbb{R}}}
\def\cal{\mathcal}
\def\definedas{\stackrel{\Delta}{=}}

\def\eqinltwo{\stackrel{L_2}{=}}	

\def\convind{\stackrel{{\cal D}}{\to}}

\def\convinltwo{\stackrel{L_2}{ \to}}

\numberwithin{equation}{section}
\usepackage{etoolbox}

\makeatletter
\patchcmd{\@maketitle}
  {\ifx\@empty\@dedicatory}
  {\ifx\@empty\@date \else {\vskip3ex \centering\footnotesize\@date\par\vskip1ex}\fi
   \ifx\@empty\@dedicatory}
  {}{}
\patchcmd{\@adminfootnotes}
  {\ifx\@empty\@date\else \@footnotetext{\@setdate}\fi}
  {}{}{}
\makeatother

\newcommand{\beq}{\begin{eqnarray}}
\newcommand{\eeq}{\end{eqnarray}}
\newcommand{\beqq}{\begin{eqnarray*}}
\newcommand{\eeqq}{\end{eqnarray*}}

\newcommand*\xbar[1]{%
  \hbox{%
    \vbox{%
      \hrule height 0.5pt 
      \kern0.5ex
      \hbox{%
        \kern-0.2em
        \ensuremath{#1}%
        \kern-0.2em
      }%
    }%
  }%
} 
\begin{document}
	\begin{frontmatter}
		\title{A central limit theorem for the Euler integral of a Gaussian random field}
		\date{\today}
		\author{Gregory Naitzat\footnote{{\it Email address:} {\tt sgregnt@gmail.com}} and Robert J.\ Adler\footnote{{\it Email address:} {\tt robert@ee.technion.ac.il}\\ \hspace*{0.5cm}
		{\it URL:}  {\tt webee.technion.ac.il/people/adler/main.html}}}
		\address{Faculty of Electrical Engineering\\Technion -- Israel Institute of Technology\\
		         Technion City, Haifa 32000, {Israel}}
		\begin{abstract}
			Euler integrals of deterministic functions have recently been shown to have a wide variety of possible applications, including in signal processing, data aggregation and network sensing. Adding random noise to these scenarios, as is natural in the majority of applications,  leads to a need for statistical analysis, the first step of which requires asymptotic distribution results for estimators. The first such result is provided in this paper, as a central limit theorem   for the Euler integral of pure, Gaussian, noise fields.
		\end{abstract}
		\begin{keyword}
			Random field, Euler integral, Gaussian processes, central limit theorem.
			\MSC[2010]  53C65   \sep 60D05\sep   60F05\sep 60G15
		\end{keyword}
	\end{frontmatter}

	\section{Introduction}\label{Sec:intro:rob}

		The Euler characteristic $\chi(A)$ of  a nice set  $A$ is perhaps the oldest, and most fundamental, of its topological invariants. For a compact $A\subset \real^1$, the Euler characteristic is merely the number of its connected components (each one of which will be an interval,  possibly containing only a single point). For $A \subset \real^2$, $\chi(A)$ becomes the number of connected components minus the number of holes, while in three dimensions $\chi(A)$ can be written as  the alternating sum of the numbers of components, handles and hollows. Similar (and, of course, more precise) definitions as alternating sums of Betti numbers, numbers of facets of simplices of differing dimension (when $A$ is triangulisable) or as indices of critical points when a Morse theoretic setting is appropriate, extend the Euler characteristic to a wide variety of sets in arbitrary dimensions.
		
		However, more important for us is that the Euler characteristic is also a valuation, which means that, when all terms are defined. we have the additivity property,
		\beq \label{additivity-rob}
			\chi(A\cup B) = \chi(A)+\chi(B) -\chi(A\cap B).
		\eeq		 
		Given additivity, it is natural to attempt to use $\chi$ to define an integral  on a suitable 
		family of functions, and, indeed, to a large extent this can be done.
		The resulting theory is known as Euler integration. 
		
		\subsection{Euler integration}\label{subsection:intro:EulerIntegral}
		
			Although in many ways Euler integration has its roots in  classical Integral Geometry, a more complete and modern theory began to evolve in the 1970's. More importantly for us, however, is that it has experienced a rapid development in the past decade  from 
			both 	applied and theoretical aspects,  providing for some elegant 
			and novel results. We shall not attempt  to survey these here, since  the recent papers of
			\cite{Ghrist2009} and  \cite{Ghrist2012} provide  excellent and broad expositions. Rather, we shall go directly to two definitions.	
			\begin{definition}
				Let $M\subset \real^n$ be compact, with finite Euler characteristic. Then a continuous function $f\:M\to \real$ is called
				tame if the homotopy 
				types of $f^{-1}((-\infty, u])$ and $f^{-1}([u,\infty))$ change only 
				finitely many times as $u$ varies over $\mathbb{R}$, and the 
				Euler characteristic of each set is  finite.
			\end{definition}
	
			\begin{definition}
				If  $f\:M\to \real$ is tame, then the upper Euler integral of $f$ over $M$ is defined by
				\beq\label{EulerUpper}
					\int_{M} f \lceil d \chi \rceil\ \definedas\  \int_{u=0}^{\infty}[\chi(f > u) - \chi(f \leq -u)]\, du,
				\eeq
				where 
				\beqq
				 \chi(f \leq u) &\definedas& \chi(f^{-1}((-\infty,u])),
				 \eeqq
				 and
				 \beqq
				 \chi(f > u) &\definedas& \chi(M) - \chi(f \leq u).
				 \eeqq
			\end{definition}

			Reading in between the lines that do {\it not} appear in the above definition, one would guess that there is also  a lower Euler integral (there is!) and that there has to be a more direct way to define an integral that follows from the additivity of 
			\eqref{additivity-rob}. In fact, this is also true, and, as a result, the Euler integral 
			shares many common properties with
			the classical theories of integration. However, it is somewhat more delicate, since although \eqref{additivity-rob}
			extends to a finite inclusion-exclusion form, it 	 does not typically extend to the countably infinite case needed for a 
			standard measure based theory of integration. The definition that we have chosen above avoids these issues, and in taking it we follow the lead of  \cite{Bobrowski2011} who, by taking \eqref{EulerUpper} as a definition rather than a property, save often irritating but unimportant (for our needs) technicalities.
	
		\subsection{A motivating application}
		
			An interesting application of the Euler integral is described in 
			\cite{Ghrist2009} and  \cite{Ghrist2012}.  

			Suppose that an unknown number of targets are located in a region $M\subset \real^n$, and each target $\alpha$ is represented by its support $U_{\alpha}\subset M$. Suppose also that the space $M$ is covered with sensors, reporting only the number of targets each one sees. Let $h:X \to \Z$ be the \textit{sensor field}, i.e.\! 
			\beqq
				h(x)\definedas\#\left\{\textrm{targets activating the sensor located at $x$}\right\}.
			\eeqq
			Then, if all the target supports satisfy $\chi(U_{\alpha}) = \beta$ for some $\beta\ne 0$,  the  readings from all the sensors can be combined to obtain the  exact number of targets via the relationship
			\beq
				\label{targets-rob}
				N\ \definedas \ \#\left\{targets\right\} \ =\  \frac{1}{\beta}\int_M h \udc.
			\eeq
			Note that we do not need to assume anything about the targets other than that they all have the same Euler characteristic. For example, we need not assume that they are all convex or even have the same number of connected components.

			While everything in \eqref{targets-rob} is deterministic, 
			\cite{Bobrowski2011} raises the question as to what happens when the deterministic `signal'
			$x= \int_M h \udc$,
			is observed via a noisy measurement $Y=\int_M (h+X)\udc$, where $X$ is a smooth random process on $M$. They show that, although Euler integrals are not always additive, in this case it is true that 
			\beqq
				Y=\int_M (h+X) \udc \ = \  \int_M h \udc + \int_M f \udc \ =\ \beta N + \xi,
			\eeqq
			which leads to the obvious estimator $\hat N$ of $N$ given by   
			\beq
				\label{Nhat-rob}
				\hat N\  =\  \beta^{-1}\left(Y -\E\left[\int_MX \udc\right]\right).
			\eeq
  
			The main result of \cite{Bobrowski2011}   is an elegant calculation of the expectation in \eqref{Nhat-rob}  when $X$ is a smooth Gaussian or Gaussian related random field and $M$ a stratified manifold, based on the Gaussian kinematic formula of  \cite{Adler2007}. Their computation leads to an explicit, closed form (and often somewhat surprising)  expression for the expectation. We shall have more to say about this in Section \ref{sec:meanvalue}.
			
			Motivated by the above, what this paper concentrates on is a central limit theorem (henceforth CLT) needed to go from the estimation provided by $\hat N$ to inference.
 
		\subsection{A CLT for the Euler integral}

			The main result of the paper is formulated  in Theorem \ref{theorem:main}, which states that if 
			$X$ is a real valued, almost surely $C^2$, stationary Gaussian random field on $\mathbb{R}^n$, satisfying certain 
			technical regularity and decay of memory conditions, and if we define
			\beqq
				\Psi_{[0,m]^{n}}[X] \ \definedas \  \int_{[0,m]^n} X\udc,
			\eeqq
			then, as $m \to \infty$,
			\beq 
			\label{equn:clt:rob}
				\frac{\Psi_{[0,m]^{n}}[X] - 
				\E[\Psi_{[0,m]^{n}}[X]]}{m^{{n/2}}}\  \convind \  
				N(0,\sigma_\Psi^2),
			\eeq
			for some limiting variance $\sigma_\Psi^2>0$, where $\convind$ denotes convergence in distribution.
				
		\subsection{Outline of the paper}
		
			Before giving the formal version of \eqref{equn:clt:rob} in Section \ref{Sec:results:rob}, in the following section we shall set up considerable preliminary material, treating the regularity conditions that we require on $X$ as well as background material on Wiener chaos expansions and the Morse theoretical representation of the Euler integral. Then, in Section \ref{Sec:results:rob}, we apply techniques  from \cite{Houdre}  and \cite{Major2014} to develop the  chaos expansion for the upper Euler integral of a Gaussian random field. This, together with some regularity and convergence
			results, are combined  with a general CLT of \cite{Nourdin2012} for chaos expansions,  to make up  the proof our main CLT. Many of the proofs here owe a lot to the papers by \cite{Kratz97}, \cite{Kratz2001}, and especially 	the recent work of \citet{Jose2014}.
			
			In Section \ref{sec:meanvalue} we look at a direct calculation of the mean value of 
			Euler integral (for the isotropic case), showing its dependence on the order-one  Lipshitz-Killing curvature of $M$,  and discuss the surprising results of \cite{Bobrowski2011} alluded to above.	

		\subsection{Acknowledgements}

			1. A reader familiar with the paper \citet{Jose2014} will note a strong resemblance between many of the technical parts of that paper and those in Section \ref{Sec:results:rob} in the present paper. This is not accidental, since that paper develops a CLT for the Euler {\it characteristic} of excursion sets, while we treat a CLT for the Euler {\it integral}. Neither result implies the other, and, as far as we can tell,  neither is derivable from the other without a lot of additional work. Nevertheless, many of the details of the calculations are similar.
			
			We became aware that \citet{Jose2014} was in preparation early in our own work, and are very grateful to Anne Estrade and J\'ose Le\'on not only for sending us a preprint of the final paper, but also for sharing in-progress versions long before the final preprint was ready. Doing so not only saved duplication of effort, but made our lives much easier.
			
			2. 
			Research supported in part by FP7-ICT-318493-STREP and ERC  2012 Advanced Grant 20120216.

	\section{Gaussian random fields and chaos expansions}\label{Sec:prelim:rob}

		Before we set up our results in a formal fashion, we need some preliminaries. In particular, we require a collection of regularity conditions on our random fields which will make the Euler integral well-defined, amenable to analysis, and which are sufficient for our CLT to hold.
		
		In addition, and this will take up most of the section, we need to set up a number of results related to chaos expansions. These will be used in the remainder of the paper to express the Euler integral in this form and then prove our CLT via a general CLT of  \cite{Nourdin2012} for chaos expansions.
		
		For general preliminaries on random fields and their connection to Morse theory we shall use the often complementary books by \citet{Adler2007} and
		 \cite{Az2009}, while for a good treatment of the 	Wiener chaos we rely on \citet{Nualart2006}. Results below that we refer to as ``standard", ``well known", or for which we fail to offer even these descriptions, can be found in one of these references. 

		\subsection{Tame Gaussian fields}
	
			For the remainder of this paper, $X$ will denote a real valued, mean zero, unit variance, Gaussian random field on $\real^n$, $n\geq 1$. 
			We denote its covariance (and correlation) function by $\rho\:\real^n\to\real$.
			For a function $f\:\real^n\to\real$ we denote its gradient by $\nabla f$, writing this and other vectors as row vectors, and its Hessian by $\nabla^2f$. We shall occasionally treat $\nabla^2f$ as a vector rather than a matrix, in which case, because of symmetry, it will have  $n(n+1)/2$ elements. It should be clear from the context whether we are using the matrix or vector interpretations. Generic constants, which may change from line to line, are denoted by $C$.
			
			We write $\Cov (Y)$ for the covariance matrix of a random vector $Y$, and the ubiquitous symbol $|\cdot |$ to denote all of modulus (of a real number), length (of a vector) and determinant (of a matrix). Again, usage should be clear from the context. 	
		
			The regularity conditions we shall require on $X$ are summarised in the following definition.	
	
			\begin{definition}	   
			\label{tame:defn} 			
				Let $X \definedas\ \{X(t),\  t\in \mathbb{R}^n \}$ be as above. Then  we call $X$
				tame if the following conditions all hold.

				\begin{enumerate}[(i)]
					\item At each $t \in \mathbb{R}^n$, the joint distribution of the  vector
							$\langle X(t),\, \nabla X(t),\, \nabla^2 X(t)\rangle $ 
							is non-degenerate.
					
					\item The covariance function, $\rho$, of X is four 
						  	times differentiable, and 		
				  			for some $\alpha>0$, and $t$ small enough, each of its four-order derivatives satisfies
							\beq
							\label{smoothness-rob}\left|	
								\rho^{(4)}(0) - \rho^{(4)}(t)\right|	\ \leq\  
								\frac{C}{(-\ln{|t|})^{1+\alpha}}.
							\eeq
					\item Set 
		          			\beq
		          			\label{psi-rob}
								\psi(t) \ \definedas \  \sup_{0\leq m \leq 4} 
								\left|\frac{\partial^m \rho}{\partial t_{i_1}\dots t_{i_m}}(t)\right|.
		          			\eeq
							Then $\psi \in L^1(\mathbb{R}^n)$, and $\psi(t)\to 0$ as $|t| \to \infty$. 
										$\it{(iv)}$
										
					\item Let $N_v(\nabla X,M)$ be the number of points, $t \in M$, for which $\nabla X(t)= v$. 					
					Then,  for any $v \in \mathbb{R}^n$, 
										\beqq
											\E\left[\left(N_v(\nabla X,M)\right)^3\right] \  < \ \infty.
										\eeqq 
				\end{enumerate}  
			\end{definition}
	
			There are a number of immediate, standard, consequences to tameness for a Gaussian random field. In particular, {\it (ii)} ensures that the trajectories of $X$ are almost  surely (henceforth a.s.) in $C^2(\mathbb{R}^n)$, and, via the exponential integrability of the suprema of Gaussian processes (assured by the Borel-Tsirelson-Ibragimov-Sudakov inequality) that 
			\beqq
				\E\big[\big| \sup_{t \in M} X_t\big|^k\big]  \ <\  \infty,
			\eeqq 	
			for any compact domain $M \subset \mathbb{R}^n$, and any $k\geq 1$.
			
			Condition {\it (i)} ensures that the realisations of $X$ are a.s.\ Morse functions. We shall prove later that {\it (iii)} ensures the decay of correlation necessary for a CLT to hold. Condition {\it(iv)} can be directly verified for  specific covariance functions  using standard integral expressions for the factorial moments of $N_v(\nabla X,M)$, as in 
			\cite{Adler2007}[Theorem 11.5.1]. Alternatively, following \cite{Belyaev1966}, this condition can be substituted by requiring non-degeneracy and smoothness for higher order derivatives of the field. (See also a second moment calculation 
		in an isotropic setting in	\cite{Jose2014}[Proposition 1.1].)
		    					
		\subsection{Correlation structure of stationary, tame, Gaussian fields}\label{decorrelation} 
		
			In what follows, we shall often  need details about the distribution of the  random vector
			\beqq
				\vec{X} \ \definedas \
				\langle
				(\nabla X)_1,\ldots,(\nabla X)_n, X ,(\nabla^2 X)_{1,1},
				(\nabla^2 X)_{1,2}, \ldots, (\nabla^2 X)_{n,n})
				\rangle,
		    \eeqq
			of length $N_n\definedas 1+n+n(n+1)/2$, where		
			\beqq
				(\nabla X)_i = \frac{\partial X}{\partial t_i}, \quad \text{and}\quad  
				(\nabla^2 X)_{i,j} = \frac{\partial^2 X}{\partial t_it_j},  \quad i,j =1,\ldots, n,
			\eeqq
			are the first and second derivatives of $X$. 	
			Since $X$ is tame, all of these derivatives exist, and all are Gaussian. Furthermore, by stationarity, the distribution of 
			$\vec{X}_s$ is independent of $s$, and the elements of the covariance matrix  are given by derivatives of the covariance 
			function $\rho$ at the origin. 
			It is then well established that the covariance matrix $\Lambda$ of $\vec{X}$ factorizes as
			\beq
			\label{equn:lambda-rob}
				\Lambda \ = \
				\begin{pmatrix}
					\Lambda_{(1)} & 0 \\
					0 & \Lambda_{(2)} \\
				\end{pmatrix},
			\eeq
			where $\Lambda_{(1)}$ is the covariance matrix of $\nabla X$  and $\Lambda_{(2)}$ is the covariance matrix of $\langle X,\, \nabla^2 X\rangle$. 	
			Now let $\Lambda^{(1/2)}$ be a square root of $\Lambda$, and define the random field  $Y$ by
			\beq
				\label{equn:Y-rob}
				Y(s)  \ \definedas \ \Lambda^{-(1/2)}\vec{X}(s).	
			\eeq
			 The representation \eqref{equn:lambda-rob} induces a similar factorization  on $\Lambda^{(1/2)}$,  $\Lambda^{-(1/2)}$, and $Y$. 	
			It is important to note that $Y$ is a vector valued random field. Furthermore, since, for each $s$, $Y(s)$ is a vector of independent, standard normal variables,  we shall call $Y$ the {\it decorrelated version} of $\vec{X}$.	
			However, note that despite the independence of the elements of $Y(s)$ for each $s$, the  vectors  $Y(s)$ and $Y(t)$ are
			not independent for $s\neq t$.
	 
			For later needs, note that if we define the covariance matrix  $K$ by
			\beq
				K(t) \ \equiv\ \Cov(Y)(t) \ \definedas \ \E[Y_0^\prime Y_{t}],
			\eeq
	 		then it is easy to check that its entries $\{(K(t))_{ij}\}_{i,j =1}^{N_n}$, 
			are bounded by
			\beq
			\label{equn:boundonK-rob} 
				|(K(t))_{jk} | \notag
				&\leq& \| \Lambda^{-(1/2)} \| \cdot\| \E[\vec{X}_0^\prime\vec{X}_{t}]\|\cdot \|[\Lambda^{-(1/2)}]^\prime\|			
				\\ &\leq&   \notag
				n^2 {\|}\Lambda^{-(1/2)}{\|}^2 
				\cdot\psi(t) \\
				&\leq& C\psi(t).
			\eeq
			Here $\psi$ is given by \eqref{psi-rob} and $C$ is a constant dependent only on the derivatives of $\rho$ at the origin. 
			
			Throughout this work, $\vec{Y}$ denotes the decorrelated version of $\vec X$.
			Before concluding this section and while the definition of $\vec{Y}$ is still fresh in our memory, we introduce
			a new notation, and using this new notation state some of the relations between $\vec{Y}$ and $X$ that we will need later on:
			\begin{itemize}
				\item  For an arbitrary vector $\vec{u}$ of dimension $d$ and a set of indexes $\mathcal{I} = \{i_j\}_{j=1}^{k}$ where $i_j \in \{1,\ldots, n \}$ and $k \leq d$, 	we define  the vector $\vec V_{\mathcal{I}}\left( \vec{u} \right)$ by
				\beq\label{equn:notation_for_vector_pick}
					\vec V_{\mathcal{I}} \left( \vec{u} \right)\ \definedas\ (u_{i_1}, u_{i_2},\ldots,u_{i_k}).
				\eeq
			     In particular, with $ \mathcal{I}=\{N_n - n\} $
			     we have
			     \beqq
			         X = \vec V_{\mathcal{I}} \left(\mathbf{\Lambda}_{(2)}^{(1/2)}\vec{Y}_{(2)}\right),
			     \eeqq
			     and with  $\mathcal{I}=\{m,l\}$
			     \beqq
			         ((\nabla X)_m, (\nabla X)_l) = \vec V_{\mathcal{I}} \left(\mathbf{\Lambda}_{(1)}^{(1/2)}\vec{Y}_{(1)}\right).
			     \eeqq
				\item For an arbitrary vector $\vec{u}$ and a set of indexes $\mathcal{I} = \{i_j\}_{j=1}^{k}$, we implicitly define a symmetric matrix $\mathbf{M}_{\mathcal{I}}\left( \vec{u} \right)$ constructed from the elements of $\vec{u}$ so that, when $\vec u = \left(\mathbf{\Lambda}^{(1/2)}_{(2)} \vec{Y}_{(2)}\right)$ and $\mathcal{I} = \{i_j,\ldots, i_k\}$,  we have
				\beq\label{equn:notation_for_matrix_pick}
					\mathbf{M}_{\mathcal{I}}\left( \mathbf{\Lambda}^{(1/2)}_{(2)} \vec{Y}_{(2)}\right) \definedas\
					\left(
						\begin{matrix}
							({\nabla^2} X)_{i_1i_i}& ({\nabla^2}X)_{i_1i_2} & \cdots & ({\nabla^2} X)_{i_1i_k} \\
							({\nabla^2} X)_{i_2i_i}& ({\nabla^2}X)_{i_2i_2} & \cdots & ({\nabla^2} X)_{i_2i_k} \\
							\vdots &    \vdots  & \cdots &  \vdots \\
							({\nabla^2} X)_{i_ki_i}& ({\nabla^2} X)_{i_ki_2} & \cdots & ({\nabla^2} X)_{i_ki_k} \\
						\end{matrix}
					\right).
				\eeq
				In particular, if $\mathcal{I} = \{1,\ldots, n\}$,	then
				\beq
					\nabla^2 X =   \mathbf{M}_{\mathcal{I}}\left( \mathbf{\Lambda}^{(1/2)}_{(2)} \vec{Y}_{(2)}\right).
				\eeq
			\end{itemize}
		
		\subsection{The spectral distribution of $Y$}\label{Sec:SpecY}
		
			Since $X$, and so its decorrelated  related version $Y$, are stationary, both have spectral representations.  While all properties of the spectral representation of $X$ follow from the classical theory (e.g.\  \citet{Yaglom62}) we need to  work a little to  set up an appropriate representation for the vector valued field $Y$. In particular, we shall do this in the language of isonormal processes, which provides the necessary  structure for later proofs. 
				
			We start by noting that since $X$ is tame, the function $\psi$ is
			integrable, and so by \eqref{equn:boundonK-rob} 	the same is true of the covariance $K$ of $Y$. Consequently, by standard spectral theory, $Y$ has a matrix valued  spectral density  function $f$ for which  
			\beq
			\label{cross-rob}
				(K(\tau))_{jk} \ =\  \E[Y_j(0)Y_k(\tau)] \ =\  
				\int_{\mathbb{R}^n}e^{i\langle \tau,\lambda \rangle}
				f_{jk}(\lambda)d\lambda,\textbf{~~}~~\tau \in \mathbb{R}^{n}.
			\eeq
			It is not too difficult to express the $f_{ij}$ in terms of the spectral density of $X$ (which is, essentially, the only `free parameter' in the entire setup) but, fortunately, their explicit form will not be important in what follows. What is important, however, and follows for the non-degeneracy condition {\it (i)} of tameness, is that,  for all $\lambda \in \mathbb{R}^n$, $(f_{jk}(\lambda)_{j,k=1}^{N_n}$ 
			is a symmetric, positive semi-definite matrix, and so has a 
			symmetric square root $(b_{jk}(\lambda))_{j,k=1}^{N_n}$. Consequently,  we also have that
			\beq
			\label{fjk-rob} 
				f_{jk}(\lambda) \ =\  \sum_{l}b_{jl}(\lambda)b_{lk}(\lambda).
			\eeq
		\subsection{Representing $Y$ via the  isonormal process}\label{Sec:isonormal} 
			Retaining the notation of the previous subsection, we start with a separable Hilbert space $\mathfrak{H}$ of Hermitian functions  
	 		\beq
				\mathfrak{H}  \definedas\ \left\{h(j, \vec \lambda):
				\{1,\ldots, {N_n}\}\times \mathbb{R}^n \to \mathbb{C}~\bigg{|}~\overline{h(j, \vec \lambda)} = h(j, -\vec \lambda),~
				\|h\|^2_{\mathfrak{H}} < \infty\right\},
	 		\eeq		    
 			with the inner product                    
			\beq
				\langle
				h,g
				\rangle_{\mathfrak{H}}  \definedas\  \sum_{j_1=1}^{{N_n}}
				\sum_{j_2=1}^{{N_n}}
				\int_{\mathbb{R}^n}
				h(j_1,\vec \lambda)
				f_{j_1,j_2}(\vec \lambda)
				\overline{g(j_2, \vec \lambda)} d\lambda.
			\eeq
			Next, we let $W^{(j)}$, $j=1,\dots,N_n$ a sequence of independent, real-valued, Gaussian white noises on 
			$\mathbb{R}^n$, and use them to define a random process over $h \in \mathfrak{H}$ by
			\beq
				W(h) \definedas\ \sum_{j=1}^{{N_n}} \sum_{k=1}^{{N_n}}
				    \int_{\mathbb{R}^n} h(j,\vec \lambda) b_{jk}(\vec \lambda) W^{(k)}(d\lambda),
			\eeq
			the integrals here all being standard stochastic integrals. 
 
			By construction $W(h)$ is centered Gaussian and $\E[W(h)W(g)] = \langle f ,g \rangle_{\mathfrak{H}}$. Moreover, since the functions $h$ are Hermitian the resulting random process is real valued. It is known as the {\it isonormal Gaussian process} on $\mathfrak{H}$.

			Finally, we want to relate $Y$ to $W$, as promised.  To this end define a new family of functions, $ \varphi_{t,k}(j,\lambda)$, $	k=1,\ldots,N_n$, $t\in\mathbb{R}^n$,	in $\mathfrak{H}$ via	
			\beq
				\varphi_{t,m} \equiv \varphi_{t,m}(j,\vec \lambda) \definedas 
				\ e^{i\langle\vec  t, \vec \lambda \rangle}\delta_{j,m}\quad j,m=1,\ldots,{N_n},~\vec t,\vec \lambda\in\mathbb{R}^n.
			\eeq
			It is straightforward to check that 			
			\beq
				\notag	 \E[W(\varphi_{\vec t_1,l})W(\varphi_{\vec t_2,m})] =
				\langle \varphi_{\vec t_1,l}, \varphi_{\vec t_2,m} \rangle_{\mathfrak{H}}= \E[Y_m(0)Y_l(t_1-t_2)].
			\eeq
	 		An immediate consequence of this is that the vector valued random field $Y$ has the following particularly  useful $L_2$ representation in terms of the isonormal process and the family  $\varphi_{s,k}$:
	 		\beq
				Y_l(s) \ \eqinltwo\  W(\varphi_{s,l}),
					\qquad 		l =1,\dots N_n, \ \ s \in \mathbb{R}^n.
			\eeq
	 			Note that it also follows from these calculations that 	
	 		\beq
	 	   		\|\varphi_{s,k}\|_\mathfrak{H} \ = \E[Y_k(0)Y_k(0)]\  =\  1,
	 	    \eeq
	 	    and 
	 		\beq	
	 			\langle \varphi_{s,k}, \varphi_{s,m} \rangle_\mathfrak{H} \ = \  \E[W(\varphi_{t,l})W(\varphi_{s,m})] \ = \ 		  \delta_{k,m},
	 		\eeq
	 		where  $\delta_{k,m}$ is the Kronecker delta.
	
		\subsection{Operations on $f\in \mathfrak{H}$}\label{subsection:Operations}
			We now describe the basic operations on $\mathfrak{H}$ that we will need later.
			Let $\{\vec{e}_j\}_{j\geq 1}$ be an orthonormal family of
			functions in $\mathfrak{H}$, and write $\mathbb{M} \definedas \{1,\ldots, {N_n}\}\times
			\mathbb{R}^n$.
			Then, $\vec{e}_j(\lambda): \mathbb{M} \to \mathbb{C}, j\geq
			1$, and
			we define the following operations:
			\begin{itemize}
			  \item Tensor product, $\vec{e}_j \otimes \vec{e}_k:
			        \mathbb{M} \times \mathbb{M} \to \mathbb{C}$:
			        \beq\label{equn:space_M}
			              [\vec{e}_j \otimes \vec{e}_k](\lambda_1, \lambda_2) \definedas
			              \vec{e}_j(\lambda_1) \vec{e}_k(\lambda_2), \quad \lambda_1, \lambda_2 \in \mathbb{M},
			        \eeq
			        where $\vec{e}_j \otimes \vec{e}_k$ belongs to the Hilbert space
			        $\mathfrak{H}^{\otimes 2}$, with the inner product induced, component-wise, by the inner product
			        in $\mathfrak{H}$:
			        \beq
			            \langle \vec{e}_j \otimes \vec{e}_k, \vec{e}_l \otimes
			            \vec{e}_m \rangle_{\mathfrak{H}^{\otimes 2}} \definedas
			            \langle \vec{e}_j,\vec{e}_l\rangle_\mathfrak{H}
			            \langle \vec{e}_k,\vec{e}_m\rangle_\mathfrak{H}\ .
			        \eeq
			  \item In a similar fashion, we define the $m$-fold tensor product of
			        $\vec{e}_j$ with itself:
			        \beq
			              \vec{e}_j^{\otimes m} \definedas\ \underbrace{\vec{e}_j \otimes
			              \cdots \otimes \vec{e}_j}_{\small{m}~\text{times}}\ ,
			        \eeq
			        where $\vec{e}_j^{\otimes m}$ belongs to the Hilbert space $\mathfrak{H}^{\otimes m}$, 
			        with the inner product defined component-wise by the inner product in $\mathfrak{H}$. Likewise, for 
			        $\vec{e}_j^{\otimes q} \in \mathfrak{H}^{\otimes q}$, $\vec{e}_k^{\otimes p} \in \mathfrak{H}^{\otimes p}$, the tensor product of higher order in $\mathfrak{H}^{\otimes q+p}$ is 
			        \beq
			              \vec{e}_j^{\otimes q} \otimes \vec{e}_k^{\otimes p} \definedas\
			              \underbrace{\vec{e}_j \otimes\cdots \otimes
			              \vec{e}_j}_{\small{q ~\text{times}}} \otimes
			              \underbrace{\vec{e}_k \otimes\cdots \otimes
			              \vec{e}_k}_{\small{p~\text{times}}}\ .
			        \eeq
			  \item			
				  Take $0 \leq r  \leq p \leq m$. The $r$-contraction,
				  $(\vec{e}_{j_1} \otimes \cdots \otimes \vec{e}_{j_p})\otimes_r(\vec{e}_{k_{1}}\otimes   \cdots \otimes \vec{e}_{k_m})$,  is in $\mathfrak{H}^{\otimes p + m   -2r}$ and, for $r = 0$, is defined as
                   \beq
                         (\vec{e}_{j_1}\otimes \cdots \otimes
                         \vec{e}_{j_p})\otimes_0 (\vec{e}_{k_{1}} \otimes
                         \cdots \otimes \vec{e}_{k_m}) \ \definedas \ (\vec{e}_{j_1}\otimes \cdots \otimes
                         \vec{e}_{j_p} \otimes \vec{e}_{k_{1}}\otimes
                         \cdots \otimes \vec{e}_{k_m}),
                   \eeq
	               while, for $1 \leq r  \leq p \leq m$,
					\begin{align}
						(\vec{e}_{j_1}\otimes \cdots \otimes
						\vec{e}_{j_p})\otimes_r(\vec{e}_{k_{1}}\otimes
						\cdots \otimes \vec{e}_{k_m}) &\definedas \nonumber \\
						\definedas\left[ \prod_{l=1}^{r}
						\langle \vec{e}_{j_l},\vec{e}_{k_l}\rangle_{\mathfrak{H}} \right]&
						(\vec{e}_{j_{r+1}}\otimes \cdots \otimes
						\vec{e}_{j_p}\otimes\vec{e}_{k_{r+1}}\otimes
						\cdots \otimes \vec{e}_{k_m}).
					\end{align}
					For $r = p = m$ we have 		                            
					\beq
						(\vec{e}_{j_1}\otimes \cdots \otimes
						\vec{e}_{j_p})\otimes_p(\vec{e}_{k_{1}}\otimes
						\cdots \otimes \vec{e}_{k_m}) =
						\left[
						\prod_{l=1}^{r} \langle \vec{e}_{j_l}, \vec{e}_{k_l}\rangle_\mathfrak{\mathfrak{H}}
						\right].
					\eeq
			  \item Symmetrization of $f \in \mathfrak{H}^{\otimes q}$
			        \beq
			          \widetilde{f} \ \equiv \ \textnormal{symm}(f)\ \definedas\
			           \frac{1}{q!}\sum_{\sigma_q}
			           f(\lambda_{\sigma(1)},\cdots,\lambda_{\sigma(q)}),
			        \eeq
			      where $\sigma_q$ is all the permutations over the indexes $\{1,\cdots,q\}$.
			      We write $\mathfrak{H}^{\odot q} \subset \mathfrak{H}^{\otimes q}$
			      for the space of all symmetric
			      $f \in \mathfrak{H}^{\otimes q}$.
			\end{itemize}		   
		
		\subsection{Wiener chaos expansion}\label{Sec:chaosdec}
			Take $W(h)$ to be an isonormal Gaussian process
			on separable Hilbert space $\mathfrak{H}$.
			Write $\mathcal{G} \definedas \sigma(W(h))$, for the $\sigma$-field generated by the random variables $\{W(h), h \in \mathfrak{H}\}$ 
			and $L^2(\mathcal{G}, \mathbb{R})$ for the space of all
			square integrable mappings from $(\Omega, \mathcal{G}, \mathbb{P})$ to $\mathbb{R}$.
				
				       	We make use of
			$\{H_n\}_{n\in \mathbb{N}}$, the probabilistic Hermite
			polynomials, defined by
			\beq
							\exp(tx-\frac{t^2}{2}) =
							\sum_{m=0}^{\infty}{H_m(x)\frac{t^m}{m!}}, \quad x \in \mathbb{R}^n,
			\eeq
			and define
			\beq
			    H_{\vec{a}}(x) \definedas \prod_{j=1}^{\infty}H_{a_j}(x_j),
			    \quad x_j \in \mathbb{R}^n, \quad a_j \in \mathbb{N},~j= 1, \ldots, \infty.
			\eeq
			Here, the sequence $\vec{a} = \{a_1,a_2,\ldots \}$ is such that only a finite
			number of elements differs from zero.
			One can make use of $\{H_{\vec{a}}(x)\}$ to construct
			an orthonormal basis for $L^2(\mathcal{G}, \mathbb{R})$.
			The basis is given by the random variables
			\beq
			    \Phi_{\vec{a}} \definedas \sqrt{\vec{a}!}
			    \prod_{j=1}^{\infty}H_{a_j}(W(\vec{e}_j)),
			    \quad \vec{a}! \definedas \prod_{i=0}^{\infty}a_i!\ ,
			\eeq
			with $W(h), h\in \mathfrak{H}$ the isonormal process as
			defined in Section \ref{Sec:isonormal} and $\{\vec{e}_j\}_{j \geq 1}$
			the orthonormal basis of $\mathfrak{H}$.
				
			Moreover, the space $L^2(\mathcal{G}, \mathbb{R})$ may be represented as the decomposition
			of a countable set of  closed orthogonal subspaces $\{\mathcal{H}_m\}_{m\in \mathbb{N}}$,
			\beq
			    L^2(\mathcal{G}, \mathbb{R}) = \oplus_{m=0}^{\infty}\mathcal{H}_m
			\eeq
			such that, for each $m$, $\{\Phi_{\vec{a}}\}_{|\vec{a}| = m}$ is a complete orthogonal
			system in $\mathcal{H}_m \subset L^2(\mathcal{G}, \mathbb{R})$.	Thus, given $F \in L^2(\mathcal{G}, \mathbb{R})$
			there is a unique decomposition 
			\beq\label{equn:chaos1}
			    F = \sum_{m=0}^{\infty}\sum_{|\vec{a}|=m}\E[\Phi_{\vec{a}} F]\Phi_a,
			     \quad |\vec{a}| \definedas \sum_{i=0}^{\infty} a_i\ ,
			\eeq
			with $\sum_{|\vec{a}|=m}\E[\Phi_{\vec{a}} F]\Phi_{\vec{a}} \in \mathcal{H}_m$,
			orthogonal for different values of $m$.
				
			We are now in  a position to set up the Wiener chaos expansion. To this end define $I_q : \mathfrak{H}^{\odot q} \to L^2(\mathcal{G}, \mathbb{R})$ by
			\beq
			    I_m(\vec{e}_{j_1} \otimes \dots \otimes
			    \vec{e}_{j_q}) = \sqrt{\vec{a}!}
			    \prod_{i=1}^{q} H_{a_i}(W(\vec{e}_{j_i})),
			\eeq
			where $a_i = \#\{k: j_k =i\}$. Using $\{I_q(\cdot)\}$,
			one can rewrite \eqref{equn:chaos1} as
			\beq\label{equn:WCD}
			        F = \sum_{q=0}^{\infty} I_q(f_q), \quad f_q \in \mathfrak{H}^{\odot q}.
			\eeq
			This representation of the random functional $F$ via the set of kernels $\{f_q\}$ through the family of linear operations, $I_q$, is called the {\it Wiener chaos decomposition} of $F$,  and the operator $I_q$ is called the {\it multiple Wiener integral} of order $q$. 		        
			
			If the underlying Hilbert space is a Polish space of the form  $L^2(A,\mathcal{A},\mu)$,  then $I_q$ can be identified with multiple stochastic   integrals. More specifically,  take $\{A_i\}\in \mathcal{A}$ disjoint sets, and define
			\beq
				u_q = \text{symm}\left[\sum a_{i_1,\ldots,i_q}\mathbbm{1}_{A_1}\otimes \cdots \otimes \mathbbm{1}_{A_q}\right].
			\eeq 
		 	The functions $u_q$ are dense in $\text{symm}[\left(L^2(A,\mathcal{A},\mu)\right)^{\otimes q}]$,
			and the integral is constructed first on the functions  $u_q$ by defining
	    	\beq
	    		I(u_q) = \sum a_{i_1,\ldots,i_q}W({A_1}) \cdots W({A_q}),
	    	\eeq 	
	    	and then  extending to a linear continuous operator on all of  $\left(L^2(A,\mathcal{A},\mu)\right)^{\odot q}$. We write					
	    	\beq
	    		I(f_q) =\int_{A} \cdots \int_{A} f_q(t_1,\cdots, t_q ) W(d\mu)\cdots W(d\mu)\quad t_i \in A,
	    	\eeq
	    	with $W(d\mu)$ a Gaussian $\mu$-noise.
				    								
			Below we list some of the basic properties
			of $I_q$ that will be of particular interest to us. For more details, see \cite[Ch. 1]{Nualart2006}. For an in depth treatment related to the approach above, see \cite{Nourdin2012}.
			\begin{itemize}
			    \item $f \in \mathfrak{H}^{\otimes m},\quad I_m(f) = I_m(\widetilde{f})$.
			    \item For any set of $h_i \in \mathfrak{H}$, such that
			          $\|h_i\|_{\mathfrak{H}} = 1, \forall i,$ we have
			          \beq\label{equn:ItoFormula}
			                \prod_{i=1}^{m} H_{a_i}(W(h_i)) =
			                I_{|\vec{a}|}(\textnormal{symm}(h_1^{\otimes a_1} \otimes
			                 \cdots \otimes h_m^{\otimes a_m})).
			          \eeq
			    \item $\E I_p(f)I_q(g) = 0$ when $p \neq q$ and $\E I_p(f)I_q(g) =
			          p!\langle \widetilde{f},\widetilde{g}\rangle_{\mathfrak{H}^{\otimes p}}$ when $p=q$.
			\end{itemize}	
	    	Other properties of the multiple Wiener integral will be recalled when needed. \\

	    	It is in this language of Weiner chaos that we seek to represent the Euler integral, and then make use of the following result.  		
    		\begin{theorem}[Theorem 6.3.1, \cite{Nourdin2012}]\label{theorem:CLTTheoremFirst}
	    		Let $(F_{m})_{m \geq 1}$ be a sequence in
	    		$L^2(\Omega,\mathcal{G},P)$ such that $\E[F_{m}] = 0~~\forall
	    		m$. Suppose that  the chaos expansion of $F$ is given by
	    		$F_m=\sum_{q=1}^{\infty}I_q(f^m_q)$, and suppose in addition that
	    		\begin{enumerate}[(a)]
		    		\item $\quad q!\|f^m_q\|^2_{\mathfrak{H}^{\otimes q}} \to \sigma^2_q$ as $m \to \infty$, for some $\sigma^2_q \geq 0$.
		    		\item $\quad \sigma^2_F \definedas \sum_{q=1}^{\infty}\sigma^2_q < \infty$.
		    		\item $\quad \forall q \geq 2$ and $r =1,\cdots q-1$, $\|f_q^m\otimes_{r}f_q^m\|_{\mathfrak{H}^{\otimes2(q-r)}}^2 \to 0$ as $m \to \infty$.
		    		\item $\quad \lim\limits_{Q \to \infty} \sup_{m \geq 1} \sum_{q=Q+1}^{\infty}q!\|f^m_q\|^2_{\mathfrak{H}^{\otimes q}} \to 0$.
	    		\end{enumerate}
	    		Then, $F_{m} \convind  \mathcal{N}(0,\sigma^2_F)$ as $m \to \infty$.
    		\end{theorem}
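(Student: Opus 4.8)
The plan is to reduce the infinite chaos expansion to a finite one by truncation, to handle the finite sum with the multivariate fourth-moment theorem, and then to remove the truncation using the uniform $L^2$ tail bound $(d)$. Accordingly, for fixed $Q\geq 1$ set $F_m^Q \definedas \sum_{q=1}^Q I_q(f_q^m)$.

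Each summand lives in a fixed chaos $\mathcal{H}_q$, and by hypothesis $(a)$ its variance $q!\|f_q^m\|^2_{\mathfrak{H}^{\otimes q}}$ converges to $\sigma_q^2$. Condition $(c)$ is precisely the Nualart--Peccati criterion: for a sequence of multiple integrals of fixed order $q\geq 2$ with convergent variance, the vanishing of every contraction norm $\|f_q^m\otimes_r f_q^m\|$, $r=1,\dots,q-1$, forces $I_q(f_q^m)\convind N(0,\sigma_q^2)$; for $q=1$ the integral is already exactly Gaussian. To assemble these marginal limits into a limit for the sum I would invoke the multivariate (Peccati--Tudor) extension of the fourth-moment theorem. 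Since distinct chaoses are orthogonal, $\E[I_p(f_p^m)I_q(f_q^m)]=0$ for $p\neq q$, so the covariance matrix of the vector $(I_1(f_1^m),\dots,I_Q(f_Q^m))$ converges to the diagonal matrix $\mathrm{diag}(\sigma_1^2,\dots,\sigma_Q^2)$; componentwise Gaussian convergence then upgrades automatically to joint convergence toward a vector of \emph{independent} normals. Summing the components by continuous mapping yields $F_m^Q \convind N(0,s_Q^2)$ as $m\to\infty$, where $s_Q^2\definedas\sum_{q=1}^Q\sigma_q^2$.

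It remains to let $Q\to\infty$. By orthogonality of the chaoses and $I_q(f_q^m)=I_q(\widetilde{f_q^m})$,
\[
\E\big[(F_m-F_m^Q)^2\big] \ =\ \sum_{q=Q+1}^\infty q!\,\|f_q^m\|^2_{\mathfrak{H}^{\otimes q}},
\]
so hypothesis $(d)$ gives $\varepsilon_Q\definedas\sup_{m\geq 1}\|F_m-F_m^Q\|_{L^2}\to 0$ as $Q\to\infty$, while $(b)$ gives $s_Q^2\to\sigma_F^2$. I would finish with a standard converging-together estimate on characteristic functions: for every $t\in\real$,
\[
\big|\E[e^{itF_m}]-e^{-t^2\sigma_F^2/2}\big|\ \leq\ |t|\,\varepsilon_Q \ +\ \big|\E[e^{itF_m^Q}]-e^{-t^2 s_Q^2/2}\big| \ +\ \big|e^{-t^2 s_Q^2/2}-e^{-t^2\sigma_F^2/2}\big|,
\]
where the first term uses $|e^{ia}-e^{ib}|\leq|a-b|$ together with $\E|Z|\leq\|Z\|_{L^2}$. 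Fixing $t$, one chooses $Q$ large so that the first and third terms are small uniformly in $m$, and then sends $m\to\infty$ so the middle term vanishes by the finite-$Q$ result. Hence the characteristic functions of $F_m$ converge to that of $N(0,\sigma_F^2)$, which is the claim.

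The genuine analytic content sits inside the two ingredients I am treating as black boxes: the Nualart--Peccati equivalence between contraction decay and asymptotic normality within a fixed chaos, and its Peccati--Tudor multivariate counterpart, both resting on Malliavin calculus (controlling $\mathrm{Var}(q^{-1}\|DI_q(f_q^m)\|^2_{\mathfrak{H}})$) combined with Stein's method. Granting these, the remaining work is bookkeeping, and I expect the delicate point to be the \emph{uniformity in $m$} supplied by $(d)$: the truncation argument succeeds only because $\varepsilon_Q$ can be made small before the limit in $m$ is taken, which is exactly what permits the interchange of the limits $m\to\infty$ and $Q\to\infty$.
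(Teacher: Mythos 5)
This statement is one the paper imports as a black box---it is Theorem 6.3.1 of \cite{Nourdin2012}, quoted without proof---so there is no internal proof to compare against. Your argument is correct and is essentially the standard proof given in that reference: truncate to $F_m^Q=\sum_{q=1}^Q I_q(f_q^m)$, apply the Nualart--Peccati fourth-moment theorem (condition (c)) together with the Peccati--Tudor multivariate extension (with off-diagonal covariances vanishing by orthogonality of chaoses) to get $F_m^Q \convind N(0,s_Q^2)$, and then use the uniform-in-$m$ tail bound (d), via the three-term characteristic-function estimate, to interchange the limits $m\to\infty$ and $Q\to\infty$; you also correctly single out that uniformity as the point where (d) is indispensable.
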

	    		
	\subsection{A Rice type formula and a Morse theoretical representation of the Euler integral}\label{Sec:Morse}	
	Although up until now we have approached the Euler integral via integration theory, for the proofs to follow we require a slightly different approach, via stratified Morse theory. This theory links the topology of  sets  to the study of critical points defined on
	them.  		In particular, the Euler characteristic of excursion sets of the form 
		\beqq
									A(M,u) \definedas \{t \in M: f(t)\geq u\},
		\eeqq		
		is easily computed via  properties of the critical points of $f$. 

To see how this works in our setting, we return to  Section \ref{subsection:intro:EulerIntegral}, with $M=T_n \definedas\ [0,m]^n$,  and, noting that $\chi(T_n) = 1$,  obtain
		\beq\label{equn:EulerUpper2}
	        	\int_{T_n} f \lceil d \chi \rceil= \int_{u=0}^{\infty}[1 - \chi(T_n \cap ({f \leq u})) - \chi(T_n \cap ({f \leq -u}))]du.
	    \eeq 	        	      
	        	
		We introduce $\mu(\vec s)$, the Morse index of a critical point $\vec s$  of $f$. (i.e.\ $\nabla f(\vec s)=0$ and the Hessian
		$\nabla^2f(\vec s)$ has $\mu(\vec s)$ negative eigenvalues.) We write $T_n^{\circ}$ for the interior of the cube, then proceed to decompose the boundary of $T_n$ into open faces each of which is an open cube of dimension less then $n$.  (The vertexes  are considered to be zero dimensional closed cubes). To save on notation we denote any such face by $J$,  and write 
		$\{J\}$ to denote the collection of all such faces. When quantities are evaluated with respect to a particular face, this will be denoted by  an appropriate subscript. With the above notation, a careful application of  Theorem 9.3.5   of \cite{Adler2007}
		(see also \cite{Bobrowski2011})  yields 	        	
		\begin{multline}\label{equn:morseinterpretation}
			\int_{T_n} f \lceil\chi \rceil =    \sum_{\{\vec s\in T_n^{\circ}:\nabla f(\vec s) = \vec 0\}}(-1)^{\mu(\vec s)}f(\vec s)\ +\\
			+ \sum_{ J  \in \{J\}\backslash T_n^{\circ}}
			\left(
			\sum_{\{\vec s \in J: \nabla f_{|J}(\vec s) = \vec 0\}}(-1)^{\dim J - \mu_{f_{|J}}(\vec s)}f(\vec s)
			\mathbbm{1}_{\{ \langle \nabla f(\vec{s}), \vec \eta_J\rangle \geq 0\}}
			\right).
		\end{multline}
	        		        
		In the above, $\{\vec \eta_J\}$ are constant vectors attached to every face of the cube $T_n$ (the details can be found in \cite{Adler2007}). 
		We identify 
		\beq\label{equn:interiorSum}
			\sum_{\{\vec s\in T_n^{\circ}:\nabla f(\vec s) = \vec 0\}}(-1)^{\mu(\vec s)}f(\vec s)
		\eeq
		as the contribution of the internal critical points to the Euler integral, and
		\beq\label{equn:boundarySum}
			\sum_{ J  \in \{J\}\backslash T_n^{\circ}}
			\left(
			\sum_{\{\vec s \in J: \nabla f_{|J}(\vec s) = \vec 0\}}(-1)^{\dim J - \mu_{f_{|J}}(\vec s)}f(\vec s)
			\mathbbm{1}_{\{\langle \nabla f(\vec{s}), \vec \eta_J\rangle \geq 0\}}
			\right)
		\eeq
		as the contribution of the critical points on the boundary.
		          
		          From a critical point representation of this kind, one can develop an integral representation of Rice type, and it is this that will be at the core of all the proofs to follow.
		                                       
	    \begin{lemma}\label{lemma:lemma0}
	        Let $f$ be a Morse function. Then 
	        	        \begin{multline}\label{equn:GApprox}
	            \int_{T_n} f \lceil d\chi \rceil =  
				\lim\limits_{\sigma \to 0}
				\int_{T_n}
				{\phi_{\sigma^2 \mathbf{I}_{n\times n}}{(\nabla{{f}_{|T_n^{\circ}}(\vec s)})\det(\nabla^2{f}_{|T_n^{\circ}}(\vec s))f_{|T_n^{\circ}}(\vec s)
				}
				d\vec s}~+ \\ +
				\sum_{ J  \in \{J\}\backslash T_n^{\circ}}(-1)^{\dim J}             \lim\limits_{\sigma \to 0}
				\int_{J}
				\phi_{\sigma^2 \mathbf{I}_{\dim J \times \dim J}}{(\nabla{{f}_{|J}(\vec s)})\det(\nabla^2{f}_{|J}(\vec s))f_{|J}(\vec s)
				\mathbbm{1}_{\{\langle \nabla f(\vec{s}), \vec{\eta}_J\rangle \geq 0\}}
				d\vec s},
	       \end{multline}
	       where $\phi_{\sigma^2 \mathbf{I}_{\dim J \times \dim J}}(\vec s)$ is a
	       $(\dim J)$-dimensional centered Gaussian kernel with the covariance matrix
	       $\sigma^2 \mathbf{I}_{\dim J \times \dim J}$.       
	    \end{lemma}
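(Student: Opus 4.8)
The plan is to start from the stratified-Morse representation \eqref{equn:morseinterpretation} and to recognise each of its two sums as the limit of a smoothed Kac--Rice integral. Concretely, I would isolate the following auxiliary identity as the heart of the matter: if $D\subset\mathbb{R}^k$ is a bounded domain, $g\:\bar D\to\mathbb{R}^k$ is $C^1$ with only finitely many non-degenerate zeros in $\bar D$ (none on $\partial D$), and $w\:\bar D\to\mathbb{R}$ is bounded and continuous at each zero of $g$, then
\begin{equation}\label{eq:smoothedKR}
\lim_{\sigma\to 0}\int_{D}\phi_{\sigma^2\mathbf{I}_{k\times k}}\!\big(g(\vec s)\big)\,\det\!\big(\nabla g(\vec s)\big)\,w(\vec s)\,d\vec s
\;=\;\sum_{\{\vec s\in D:\,g(\vec s)=\vec 0\}}\sign\!\big(\det\nabla g(\vec s)\big)\,w(\vec s).
\end{equation}
Granting \eqref{eq:smoothedKR}, the lemma follows by two applications: on $T_n^{\circ}$ with $k=n$, $g=\nabla f$, $\nabla g=\nabla^2 f$, $w=f$; and on each boundary face $J$ with $k=\dim J$, $g=\nabla f_{|J}$, $\nabla g=\nabla^2 f_{|J}$, $w=f\cdot\mathbbm{1}_{\{\langle\nabla f,\vec\eta_J\rangle\geq 0\}}$.

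To prove \eqref{eq:smoothedKR} I would localise. Because $f$ is Morse, the zeros $\vec s_1,\dots,\vec s_p$ of $g$ in $\bar D$ are finite and isolated with $\det\nabla g(\vec s_i)\neq 0$; choose disjoint balls $B_i=B(\vec s_i,\e)$ on which $g$ is, by the inverse function theorem, a diffeomorphism onto a neighbourhood of the origin. On the compact remainder $D\setminus\bigcup_i B_i$ one has $|g|\geq c>0$, so the Gaussian weight is bounded by $(2\pi\sigma^2)^{-k/2}e^{-c^2/2\sigma^2}$; since $\det\nabla g$ and $w$ are bounded there, this part of the integral tends to $0$ as $\sigma\to0$. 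On each $B_i$ I would change variables $\vec u=g(\vec s)$, so $d\vec u=|\det\nabla g|\,d\vec s$ and $\det\nabla g\,d\vec s=\sign(\det\nabla g)\,d\vec u$, giving
\begin{equation}\label{eq:localchange}
\int_{B_i}\phi_{\sigma^2\mathbf{I}}\!\big(g(\vec s)\big)\det\!\big(\nabla g\big)\,w\,d\vec s
=\int_{g(B_i)}\phi_{\sigma^2\mathbf{I}}(\vec u)\,\sign\!\big(\det\nabla g(g^{-1}(\vec u))\big)\,w\!\big(g^{-1}(\vec u)\big)\,d\vec u.
\end{equation}
Since $\phi_{\sigma^2\mathbf{I}}$ is an approximate identity of total mass $1$ concentrating at $\vec u=\vec 0$, the integrand in \eqref{eq:localchange} is continuous at $\vec 0$ with sign locally constant (as $\det\nabla g(\vec s_i)\neq 0$), and $g(B_i)$ contains a fixed ball about $\vec 0$, the right-hand side converges to $\sign(\det\nabla g(\vec s_i))\,w(\vec s_i)$, the escaping Gaussian mass being negligible. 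Summing over $i$ yields \eqref{eq:smoothedKR}.

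It then remains to match signs. For a non-degenerate critical point of $f$ the Hessian has exactly $\mu(\vec s)$ negative eigenvalues, so $\sign(\det\nabla^2 f(\vec s))=(-1)^{\mu(\vec s)}$, and the interior application of \eqref{eq:smoothedKR} reproduces \eqref{equn:interiorSum} verbatim. On a face $J$ the same reasoning gives $\sign(\det\nabla^2 f_{|J}(\vec s))=(-1)^{\mu_{f_{|J}}(\vec s)}$; multiplying by the prefactor $(-1)^{\dim J}$ and using $(-1)^{\dim J}(-1)^{\mu_{f_{|J}}}=(-1)^{\dim J-\mu_{f_{|J}}}$ reproduces the face contribution in \eqref{equn:boundarySum}, with the indicator carried along inside $w$.

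The main obstacle is the rigorous localisation in the presence of the discontinuous indicator on the boundary faces: the change-of-variables step requires $w$ to be continuous at each zero of $g$, whereas $\mathbbm{1}_{\{\langle\nabla f,\vec\eta_J\rangle\geq 0\}}$ jumps across $\{\langle\nabla f,\vec\eta_J\rangle=0\}$. The observation that rescues it is that at a critical point $\vec s$ of $f_{|J}$ the tangential gradient vanishes, so $\langle\nabla f(\vec s),\vec\eta_J\rangle$ equals the normal derivative, which is non-zero for a stratified-Morse $f$; hence the indicator is locally constant near $\vec s$ and may be replaced by its value $\mathbbm{1}_{\{\langle\nabla f(\vec s),\vec\eta_J\rangle\geq 0\}}$, restoring continuity of the localised weight. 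Secondarily one must confirm that no critical point lies on $\partial D$ and that the escaping Gaussian mass vanishes uniformly, both of which follow from compactness and the non-degeneracy hypotheses already in force.
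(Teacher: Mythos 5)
Your proposal is correct and follows essentially the same route as the paper: both start from the stratified--Morse representation \eqref{equn:morseinterpretation}, identify each smoothed Rice-type integral as a Dirac-delta approximation concentrating at the critical points, and then convert $\left|\det\nabla^2 f\right|$ into $(-1)^{\mu}\det\nabla^2 f$ via the eigenvalue-sign argument. The only difference is one of detail, in your favour: where the paper invokes ``standard techniques'' for the delta-composition identities, you prove them by localisation and change of variables, and you also explicitly handle the discontinuity of the indicator $\mathbbm{1}_{\{\langle \nabla f, \vec\eta_J\rangle \geq 0\}}$ near boundary critical points, a point the paper passes over in silence.
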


	\begin{proof}
		Standard techniques for the construction of Rice type integral formulae, along with the fact that  
		the  $\sigma \to 0$ limit of  $\phi_{\sigma^2 \mathbf{I}}$ is a Dirac delta,  establish that, for any $J \in \{J\}$,
				\begin{align}\label{equn:delta_composition1}
				\notag
			\lim_{\sigma \to 0} 
			\int_{T_n} 
			\phi_{\sigma^2 \mathbf{I}}
			(\nabla f(\vec s))\left|\det(\nabla^2 f(\vec s))\right| f(\vec s)d\vec s & = \int_{T_n} (\delta \circ  \nabla f)(\vec s) f(\vec s)d\vec s \\ &= \sum_{\{\vec s \in T_n^{\circ}:\nabla f(\vec s) = \vec 0\}}f(\vec s),
		\end{align}
		and
		\begin{align}\label{equn:delta_composition2}  \notag
			&\lim_{\sigma \to 0} \int_{J} \phi_{\sigma^2 \mathbf{I}_{\dim J \times \dim J}}(\nabla{{f}_{|J }(\vec s)})\left|\det(\nabla^2{f}_{| J}(\vec s))\right|f_{|J}(\vec s) \mathbbm{1}_{\{\langle \nabla f(\vec{s}), \vec{\eta}_J\rangle \geq 0\}}d\vec s  \\
			&\qquad = \int_{J} (\delta \circ  \nabla f_{| J})(\vec s) f_{| J}(\vec s)  \mathbbm{1}_{\{\langle \nabla f(\vec{s}), \vec{\eta}_J\rangle \geq 0\}}d\vec s  \notag
			\\ &\qquad  = \sum_{\{\vec s \in J:\nabla f_{|J}(\vec s) = \vec 0\}} f(\vec s) \mathbbm{1}_{\{\langle \nabla f(\vec{s}), \vec{\eta}_J\rangle \geq 0\}}.
			\end{align}
		Using the fact that the  determinant of a matrix equals the product of its eigenvalues, 
		it follows  that $\sign\{\det\left(\nabla^2 f_{|J}\right)(\vec s)\} = (-1)^{\mu_{|J(\vec s)}}$. Thus,  we can drop the absolute value in \eqref{equn:delta_composition1} and \eqref{equn:delta_composition2},  and, applying \eqref{equn:morseinterpretation},  complete  the proof.
	\end{proof}
	
	We now have all that we need to formulate, and to prove, the main result of this paper.
                                      			  
\section{A CLT for the Euler integral}\label{Sec:results:rob}

	\begin{theorem}\label{theorem:main}
		Let $X \definedas \{X(\vec{s}) | \vec{s}\in \mathbb{R}^n \}$ be a tame Gaussian field, as in Definition
		\ref{tame:defn}.
		Then, the (upper)
		Euler integral
		\beqq
			\Psi_{[0,m]^{n}}[X]\ \definedas\
			\int_{[0,m]^{n}}X(\vec s) \lceil d\chi \rceil
		\eeqq
		satisfies the  central limit theorem
		\beqq
			\frac{\Psi_{[0,m]^{n}}[X] -
			\E[\Psi_{[0,m]^{n}}[X]]}{m^{{n}/{2}}} \ \convind \
			\mathcal{N}(0,\sigma^2_\Psi),\text{ as } m \rightarrow \infty.
		\eeqq
		where  $\sigma^2_\Psi>0 $ is defined by \eqref{sigmapsi:eqn} below.
	\end{theorem}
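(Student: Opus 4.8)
The plan is to verify the four hypotheses of the chaos central limit theorem, Theorem \ref{theorem:CLTTheoremFirst}, for the normalised functional
\[
  F_m \definedas \frac{\Psi_{[0,m]^{n}}[X] - \E[\Psi_{[0,m]^{n}}[X]]}{m^{n/2}}.
\]
The first task is to produce its Wiener chaos decomposition $F_m = \sum_{q\geq 1} I_q(f^m_q)$. Starting from the Rice-type representation of Lemma \ref{lemma:lemma0}, the interior term is a $\sigma\to 0$ limit of integrals over $T_n$ of a functional $G_\sigma(\vec X(\vec s))$, where $\vec X(\vec s) = \Lambda^{(1/2)} \vec Y(\vec s)$ and, for each fixed $\vec s$, $\vec Y(\vec s)$ is a standard Gaussian vector with the isonormal representation $Y_l(\vec s) \eqinltwo W(\varphi_{\vec s, l})$. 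Expanding $G_\sigma$ in Hermite polynomials and invoking the product formula \eqref{equn:ItoFormula} yields a pointwise chaos expansion; integrating over $T_n$, dividing by $m^{n/2}$, subtracting the mean (which removes the $q=0$ term), and passing to the limit $\sigma\to 0$ produces kernels $f^m_q \in \mathfrak{H}^{\odot q}$ that are space integrals over $T_n$ of symmetrised tensor products of the $\varphi_{\vec s,\cdot}$. The boundary faces of Lemma \ref{lemma:lemma0} are treated identically but, being of dimension at most $n-1$, contribute kernels whose squared norms are $O(m^{n-1}/m^n)=O(1/m)$ and are hence asymptotically negligible.

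For conditions (a) and (b), note that $q!\|f^m_q\|^2_{\mathfrak{H}^{\otimes q}} = \Var(I_q(f^m_q))$ equals $m^{-n}\int_{T_n}\int_{T_n} c_q(\vec s-\vec t)\,d\vec s\,d\vec t$, where $c_q$ is the $q$-th chaos component of the covariance of the integrand evaluated at two points, a function of $\vec s-\vec t$ by stationarity. The change of variables $\vec\tau = \vec s-\vec t$ gives $m^{-n}\int_{T_n}\int_{T_n} c_q = \int_{\real^n}\prod_{i=1}^n(1-|\tau_i|/m)_+\, c_q(\vec\tau)\,d\vec\tau \to \int_{\real^n} c_q(\vec\tau)\,d\vec\tau \definedas \sigma^2_q$ by dominated convergence once $c_q\in L^1(\real^n)$. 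The required integrability is supplied by the decay-of-memory condition \emph{(iii)} together with the entrywise bound $|(K(\vec\tau))_{jk}|\leq C\psi(\vec\tau)$ of \eqref{equn:boundonK-rob}: each $c_q$ is dominated by a finite sum of products of $q$ such entries, hence by $C_q\,\psi(\vec\tau)^q \le C_q\,\psi(\vec 0)^{q-1}\psi(\vec\tau) \in L^1(\real^n)$. Summing over $q$ identifies the limiting variance $\sigma^2_\Psi=\sum_{q\geq 1}\sigma^2_q$, whose finiteness (condition (b)) rests on the $L^2$-control of the integrand, itself a consequence of the finite-moment condition \emph{(iv)} on the number of critical points.

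The main obstacle is condition (c), the vanishing of the contraction norms $\|f^m_q\otimes_r f^m_q\|^2_{\mathfrak{H}^{\otimes 2(q-r)}}$ for $q\geq 2$ and $1\leq r\leq q-1$. Expanding the inner products in $\mathfrak{H}$ reduces each such norm to a multiple integral, over four copies of $T_n$, of products of entries of the covariance kernel $K$; after normalisation by $m^n$ one must show these integrals are $o(1)$. The $L^1$ decay from \emph{(iii)} controls the off-diagonal ranges, but the delta-on-the-gradient and determinant-of-Hessian structure of the integrand concentrates mass near the diagonal, and handling this requires the quantitative regularity of \eqref{smoothness-rob} --- the logarithmic modulus of continuity of the fourth-order derivatives of $\rho$ --- to bound the near-diagonal contribution. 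This is the step that most closely parallels the technical work of \citet{Jose2014} and \cite{Kratz2001}, and is where the bulk of the effort lies.

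Finally, condition (d), the uniform tail bound $\lim_{Q\to\infty}\sup_{m}\sum_{q>Q} q!\|f^m_q\|^2_{\mathfrak{H}^{\otimes q}}=0$, follows from the same $\psi$-domination: it gives $q!\|f^m_q\|^2_{\mathfrak{H}^{\otimes q}}\leq C'_q$ uniformly in $m$ with $\sum_q C'_q<\infty$, so no mass escapes to arbitrarily high chaos order uniformly in $m$. With (a)--(d) in hand, Theorem \ref{theorem:CLTTheoremFirst} yields $F_m \convind \mathcal{N}(0,\sigma^2_\Psi)$. A final, separate argument using the non-degeneracy condition \emph{(i)} of Definition \ref{tame:defn} establishes $\sigma^2_\Psi>0$, ensuring that the limit law is genuinely Gaussian rather than degenerate.
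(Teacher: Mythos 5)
Your skeleton is the same as the paper's (Rice-type representation $\to$ Hermite/chaos expansion of the interior term $\to$ the Nourdin--Peccati criterion, Theorem \ref{theorem:CLTTheoremFirst}), and your treatment of the chaos decomposition, of condition \emph{(a)} by dominated convergence, and of the boundary faces is consistent with the paper. But the two conditions that carry the technical weight are not actually verified, and in one case you have misdiagnosed where the difficulty lies. For condition \emph{(c)} you assert that the ``delta-on-the-gradient and determinant-of-Hessian structure'' concentrates mass near the diagonal and that the logarithmic modulus condition \eqref{smoothness-rob} is needed to control it, and you then defer ``the bulk of the effort''. This is not where the difficulty is: once the kernels $f^m_q$ are written as space integrals of symmetrised tensors of the $\varphi_{\vec s,k}$, the contraction norm $\|f^m_q\otimes_r f^m_q\|^2_{\mathfrak{H}^{\otimes 2(q-r)}}$ is a four-fold integral over $(0,m)^{4n}$ of products of entries of $K$, each bounded by $C\psi$ via \eqref{equn:boundonK-rob}; since $\psi$ is bounded and lies in $L^1(\real^n)$ (tameness \emph{(iii)}), the elementary splitting $\psi^{r}(\vec x-\vec y)\psi^{q-r}(\vec x-\vec w)\leq \psi^{q}(\vec x-\vec y)+\psi^{q}(\vec x-\vec w)$ and successive integration give $\|f^m_q\otimes_r f^m_q\|^2=O(m^n)$, so after the kernel normalisation the contractions are $O(m^{-n})\to 0$. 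Condition \eqref{smoothness-rob} plays no role here; it is used only to guarantee a.s.\ $C^2$ (hence Morse) sample paths. So the step you flag as the hard one is in fact routine, and you have left it unproven in any case.

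The genuinely delicate step is condition \emph{(d)}, which you dispose of in one sentence, and your argument for it fails. The bound you propose, $|c_q|\leq C_q\,\psi(\vec\tau)^q\leq C_q\,\psi(\vec 0)^{q-1}\psi(\vec\tau)$, cannot be summed over $q$: since $X$ has unit variance, $\psi(\vec 0)\geq\rho(0)=1$, so the factor $\psi(\vec 0)^{q-1}$ does not decay, the combinatorial constants $C_q$ grow with $q$ (the cardinality of $\pi_n(q)$ is polynomial in $q$ and the Mehler-type sums carry factorials), and near the diagonal $\psi^q$ is not small. To get a bound uniform in $m$ and summable in $q$ the paper needs three ingredients you do not supply: the coefficient estimate $\sum_{\vec a\in\pi_n(q)}d^2_{\vec a}\vec a!<Cq^n$ of Lemma \ref{lemma:lemma4} (which exploits that the Hessian/field factor $f_2$ is a polynomial, hence has a \emph{finite} Hermite expansion, together with the bound $H_a(0)^2/a!\leq C$); Arcones' inequality (Lemma \ref{lemma:arcones}), which applies only where $\psi_*<1$ and therefore forces a split of $(-m,m)^n$ into a fixed cube $R^n_0(s)$ (handled by finiteness of the full variance) and its complement, where $K\psi<K\varepsilon<1$; and the resulting geometric factor $K^q\varepsilon^q$ against the polynomial $Cq^n$ to sum the tail uniformly in $m$. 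Condition \emph{(b)} inherits the same issue: your appeal to the moment condition \emph{(iv)} only yields $\Var(F_{(0,m)^n}[X])<\infty$ for each fixed $m$, not the uniform-in-$m$, summable-in-$q$ control needed to conclude $\sigma^2_\Psi=\sum_q\sigma^2_q<\infty$; in the paper both \emph{(b)} and \emph{(d)} come out of the Arcones-based argument. As written, your proposal establishes the framework but not the theorem.
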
        
   Note that an expression for the mean value of Euler integral, $\E[\Psi_{[0,m]^{n}}[X]]$, was derived in \cite{Bobrowski2011},  and is also discussed in Section \ref{mean:sec}  below from the point of view of chaos expansions.

   Before starting the proof of Theorem \ref{theorem:main}, note that while expressions like    \eqref{equn:morseinterpretation}  and  \eqref{equn:GApprox}   relate to the full Euler integral, only the first sum in \eqref{equn:morseinterpretation}  and  the first integral
    in \eqref{equn:GApprox}, which relate to contributions from the interior of $T_n$,   are relevant for the CLT.  The reason for this lies in the normalisation of $m^{-{n/2}}$, which applies equally to all terms. It  follows from the calculations of this subsection that all non-interior terms, when normalised by $m^{-{n/2}}$,  converge in probability to zero, and so not affect the limiting distribution. We leave the (simple) details of this to the reader, and so in dealing with the CLT henceforth concentrate only on the interior terms.

   We start with a sequence of lemmas, which will ultimately be combined to provide a full proof of Theorem \ref{theorem:main} in the following subsection.
   
   \subsection{Four supporting lemmas}
   
    \begin{lemma}\label{lemma:lemma1}
		Let $X$ be a tame Gaussian random field. Let 
		\beq
		 	F_{(0,m)^n}[X] = \sum_{\{\vec s\in T_n^{\circ}:\nabla X(\vec s) = \vec 0\}}(-1)^{\mu(\vec s)}X(\vec s),
		\eeq
		and
		\beq
		 F_{(0,m)^n}^{\sigma}[X] =
		     \int_{(0,m)^n}
		         {\phi_{\sigma^2 \mathbf{I}}(\nabla{X}(\vec s))\det(\nabla^2{X}(\vec s))X(\vec s)d\vec s}.
		\eeq
		Then,
		\beq
		       F_{(0,m)^n}^{\sigma}[X]
		       \convinltwo F_{(0,m)^n}[X]~~\text{as}~~ \sigma \to 0.
		\eeq
	\end{lemma}

	\begin{proof}
		We deduce the $L^2$ convergence from the  following  two facts:
		\begin{enumerate}[(a)]
			\item $F_{(0,m)^n}^{\sigma}[X]
			\overset {a.s.}\longrightarrow F_{(0,m)^n}[X]
			\text{ as } \sigma \to 0$,
			\item There exists $\varepsilon>0$ so that
			$\sup_{\sigma}\E \big{[}
			F_{(0,m)^n}^{\sigma}[X]
			\big{]}^{2 + \varepsilon} <\infty$.
		\end{enumerate}
		
		To prove (a), note that the trajectory of $X$ is almost surely Morse and the result then follows from Lemma \ref{lemma:lemma0}. 
		To show (b), we write an upper bound for $\E \big{[}
		F_{(0,m)^n}^{\sigma}[X]
		\big{]}^{2 + \varepsilon}$ independent of $\sigma$. To this end, using Federer's coarea formula (cf.\ \cite{Az2009},
		Proposition 6.1, for a version couched in our terminology) we have
		\beq\label{equn:aria}
			\int_{(0,m)^n}{\phi_{\sigma^2 \mathbf{I}}(\nabla{X}(\vec s))|
			\det(\nabla^2{X}(\vec s))|d\vec s} =
			\int_{\mathbb{R}^n}{\phi_{\sigma^2 \mathbf{I}}(\vec u)
			N_{\vec u}(\nabla{X},(0,m)^n)d \vec u}.
		\eeq 
		Write $X_{\text{sup}}\definedas \sup_{\vec s\in(0,m)^n}|X(\vec s)|$. Then,
		applying  \eqref{equn:aria}, we obtain
		\beqq
			\int_{(0,m)^n}{\phi_{\sigma^2 \mathbf{I}}(\nabla{X}(\vec s))
			\det(\nabla^2{X}(\vec s))X(\vec s)d\vec s}
			\leq
			\left|(X_{\text{sup}})\right|
			\left|
			\int_{\mathbb{R}^n}{\phi_{\sigma^2 \mathbf{I}}(\vec u)N_{\vec u}
			(\nabla{X},(0,m)^n)d\vec u}
			\right|.
		\eeqq
		Using the fact that $ab \leq \frac{a^p}{p} + \frac{b^q}{q}$ for $a,b>0$ and $\frac{1}{p}+\frac{1}{q}=1$, we have  
		\begin{align}                 
			\left|(X_{\text{sup}})                                                          \int_{\mathbb{R}^n}{\phi_{\sigma^2 \mathbf{I}}(\vec u)N_{\vec u}
			(\nabla{X},(0,m)^n)d\vec u}
			\right|^{(2+\varepsilon) }&  \notag \\
			\leq
			\frac{1}{p} \left|(X_{\text{sup}})^{(p(2+\varepsilon))}\right| + &
			\frac{1}{q}\left|
			\int_{\mathbb{R}^n}{\phi_{\sigma^2 \mathbf{I}}(\vec u)N_{\vec u}
			(\nabla{X},(0,m)^n)d\vec u}
			\right|^{q(2+\varepsilon)}.
		\end{align}
		Taking  expectations yields    
		\begin{align}\label{equn:after_holder}
			\E\bigg{[}\left|
			\int_{(0,m)^n}{\phi_{\sigma^2 \mathbf{I}}(\nabla{X}(\vec s))
			\det(\nabla^2{X}(\vec s))X(\vec s)d\vec s}\right|
			\bigg{]}^{(2+\varepsilon)}&
			\notag \\ \leq \frac{1}{p}
			\E \big{|}(X_{\text{sup}})\big{|}^{p(2+\varepsilon)} +
			\frac{1}{q}\E\bigg{[}&
			\int_{\mathbb{R}^n}{\phi_{\sigma^2 \mathbf{I}}(\vec u) N_{\vec u}(\nabla{X},(0,m)^n)d\vec u}                                                                \bigg{]}^{q(2+\varepsilon)}.
		\end{align}
		$\E \big{|}(X_{\text{sup}})\big{|}^{p(2+\varepsilon)}$ is finite due to our assumptions of tameness on $X$, so we focus on the second term in \eqref{equn:after_holder}, viz. 
		\beq
			\E\bigg{[}
			\int_{\mathbb{R}^n}{
			\phi_{\sigma^2 \mathbf{I}}(\vec u) N_{\vec u}(\nabla{X},(0,m)^n)d\vec u}                                                                \bigg{]}^{q(2+\varepsilon)}.
		\eeq
		Jensen's inequality, when applied to
		the inner integral (and not to the expectation), implies that the above can be bounded by
		\beqq                                
			\E\bigg{[}
			\int_{\mathbb{R}^n}{\phi_{\sigma^2 \mathbf{I}}(\vec u)\big{[}
			N_{\vec u}(\nabla{X},(0,m)^n)\big{]}^{(2+\varepsilon)}d\vec u}
			\bigg{]}.
		\eeqq                             
		Using Tonelli's theorem, this equals
		\beqq 
			\int_{\mathbb{R}^n}{\phi_{\sigma^2 \mathbf{I}}(\vec u)
			\E\big{[}
			N_{\vec u}(\nabla{X},(0,m)^n)\big{]}^{q(2+\varepsilon)}
			d\vec u},
		\eeqq 
		and, finally (for $q = 1 + \varepsilon)$ under the assumptions of a tameness we have
		\beqq 
			\E
			N_{\vec u}(\nabla{X},(0,m)^n)
			^{(2+\varepsilon)} \leq M,
		\eeqq 
		and we are done.\\

		For the other faces $J \in \{J\} \backslash T^{\circ}_n$, of dimension $d \equiv$ dim$(J)<n$, we have
		\begin{align}
			\left|
			\E\bigg{[}
			\int_{J}{\phi_{\sigma^2 \mathbf{I}}(\nabla{X}(\vec s))
			\det(\nabla^2{X}(\vec s))X(\vec s) \mathbbm{1}_{\{\langle \nabla X(\vec{s}), \eta_J\rangle \geq 0\}}d\vec s}
			\bigg{]}^{(2+\varepsilon)}
			\right| 
			\qquad \qquad \notag  \\ 
			\qquad \qquad \qquad 
			\leq                           
			\E\bigg{[}
			(X_{\text{sup}})
			\int_{\mathbb{R}^d}{\phi_{\sigma^2 \mathbf{I}}(\vec u)N_{\vec u}
			(\nabla{X},J)d\vec u}
			\bigg{]}^{(2+\varepsilon)},
		\end{align}
		and can then repeat the same argument as above.
	\end{proof}

	For the next lemma, which deals with  the Wiener chaos decomposition of $F_{(0,m)^n}[X]$, we introduce the notations 
	$$\pi_n(q) \equiv \big{\{} \vec{a}~ \big{|}~a_1 +...+ a_{N_n} = q\big{\}},$$
	 and 
	 $$\widetilde{H}_{\vec{a}}(Y_s) \equiv \prod_{i=1}^{N_n} H_{a_i}(Y_i(s)).$$

	\begin{lemma}\label{lemma:lemma2}
		$F_{(0,m)^n}^{\sigma}[X]$ admits
		the  Wiener chaos expansion
		\beq\label{equn:PhiExp1}
			F_{(0,m)^n}^{\sigma}[X] \ \eqinltwo\
			\sum_{q=1}^{\infty}
			\sum_{\vec{a} \in \pi_n(q)}
			d^{\sigma}_{\vec{a}}\int_{\small{(0,m)^n}}
			\widetilde{H}_{\vec{a}}(\vec{Y}_{\vec s})d\vec s,
		\eeq
		where
				\begin{align}
			d^{\sigma}_{\vec{a}} &=
			\frac{1}{\vec{a}!}
			\int_{\mathbb{R}^n}\phi_{\sigma^2 \mathbf{I}_{n\times n}}\left(\mathbf{\Lambda}_{(1)}^{(1/2)}\vec{v}\right)
			\prod_{i=1}^{n} H_{a_i}(v_i)\phi(v_i)d \vec v\\
			&\times
			\int_{\mathbb{R}^{({N_n}-n)}}
			{ \det\left( \mathbf{M}_{\mathcal{I}}\left( \mathbf{\Lambda}^{(1/2)}_{(2)} \vec{u}\right)\right)
			\vec V_{\left\{N_n - n\right\}}\left(\mathbf{\Lambda}_{(2)}^{(1/2)}\vec{u}\right)
			\prod_{i=1}^{{N_n}-n} H_{a_{n+i}}(u_i)\phi(u_i)d \vec u}.
		\end{align}
    \end{lemma}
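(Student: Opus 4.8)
The plan is to fix $\sigma>0$ throughout (the delicate $\sigma\to 0$ passage having already been settled in Lemma \ref{lemma:lemma1}) and to expand the integrand \emph{pointwise} in Hermite polynomials before integrating over the cube. Write
\[
	G_\sigma(\vec s)\ \definedas\ \phi_{\sigma^2 \mathbf{I}}(\nabla X(\vec s))\det(\nabla^2 X(\vec s))X(\vec s),
\]
so that $F_{(0,m)^n}^{\sigma}[X]=\int_{(0,m)^n}G_\sigma(\vec s)\,d\vec s$. At each fixed $\vec s$ the random variable $G_\sigma(\vec s)$ is a deterministic function of the decorrelated vector $\vec Y_{\vec s}$, whose $N_n$ components are independent standard Gaussians. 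For $\sigma>0$ the kernel $\phi_{\sigma^2\mathbf I}$ is bounded, while $X$ and the entries of $\nabla^2 X$ possess Gaussian moments of every order; hence $G_\sigma(\vec s)\in L^2(\Omega)$ (this is exactly the moment control already used in part (b) of Lemma \ref{lemma:lemma1}). Since the products $\{\widetilde H_{\vec a}(\vec Y_{\vec s})\}_{\vec a}$ form an orthogonal basis of $L^2(\sigma(\vec Y_{\vec s}))$ with $\E[\widetilde H_{\vec a}(\vec Y_{\vec s})^2]=\vec a!$, the variable $G_\sigma(\vec s)$ admits the $L^2(\Omega)$-convergent expansion $G_\sigma(\vec s)=\sum_{q}\sum_{\vec a\in\pi_n(q)}d^{\sigma}_{\vec a}(\vec s)\,\widetilde H_{\vec a}(\vec Y_{\vec s})$, with $d^{\sigma}_{\vec a}(\vec s)=(\vec a!)^{-1}\E[G_\sigma(\vec s)\widetilde H_{\vec a}(\vec Y_{\vec s})]$. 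By stationarity these coefficients do not depend on $\vec s$, so I write $d^{\sigma}_{\vec a}(\vec s)=d^{\sigma}_{\vec a}$.

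The second step is to evaluate $d^{\sigma}_{\vec a}$ and recover the stated product form. Here the block-diagonal factorization $\Lambda=\mathrm{diag}(\Lambda_{(1)},\Lambda_{(2)})$ of \eqref{equn:lambda-rob} is decisive: it forces $\Lambda^{(1/2)}$ to be block-diagonal as well, so that, in the notations \eqref{equn:notation_for_vector_pick} and \eqref{equn:notation_for_matrix_pick}, the factor $\phi_{\sigma^2\mathbf I}(\nabla X)=\phi_{\sigma^2\mathbf I}(\mathbf\Lambda_{(1)}^{(1/2)}\vec Y_{(1)})$ is a function of $\vec Y_{(1)}=(Y_1,\dots,Y_n)$ alone, whereas $\det(\nabla^2 X)\,X$ is a function of $\vec Y_{(2)}=(Y_{n+1},\dots,Y_{N_n})$ alone. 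Since all components of $\vec Y_{\vec s}$ are independent, the expectation defining $d^{\sigma}_{\vec a}$ factorizes into a product of two independent expectations, and the prefactor $(\vec a!)^{-1}=\prod_{i=1}^{N_n}a_i!^{-1}$ splits accordingly across the two index groups. Passing to the standard Gaussian densities $\phi(v_i)$ and $\phi(u_i)$ of the components rewrites each factor as the corresponding Gaussian integral, which are precisely the two integrals appearing in the statement.

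Finally I would lift the pointwise expansion to one for the integral. The cube $(0,m)^n$ is bounded and, by stationarity, $\sup_{\vec s}\|G_\sigma(\vec s)\|_{L^2(\Omega)}<\infty$, so $\vec s\mapsto G_\sigma(\vec s)$ is Bochner integrable as an $L^2(\Omega)$-valued map, and $F_{(0,m)^n}^{\sigma}[X]$ equals its Bochner integral. Each orthogonal projection onto the $q$-th chaos $\mathcal H_q$ is a bounded linear operator on $L^2(\Omega)$, hence commutes with the Bochner integral; applying it and summing over $q$ yields $F_{(0,m)^n}^{\sigma}[X]=\sum_{q\ge0}\sum_{\vec a\in\pi_n(q)}d^{\sigma}_{\vec a}\int_{(0,m)^n}\widetilde H_{\vec a}(\vec Y_{\vec s})\,d\vec s$. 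Separating off the constant $q=0$ term, which equals $m^n d^{\sigma}_{\vec 0}=\E[F_{(0,m)^n}^{\sigma}[X]]$ and is removed upon centering for the CLT, leaves exactly the series \eqref{equn:PhiExp1}.

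The main obstacle is the justification underlying this last interchange: one must check that the $\vec s$-integration genuinely commutes with an a priori only $L^2(\Omega)$-convergent Hermite series, and that the result is an honest Wiener chaos expansion. For fixed $\sigma>0$ this is tractable precisely because $\phi_{\sigma^2\mathbf I}$ is bounded, which simultaneously secures the uniform $L^2$ bound needed for Bochner integrability and keeps the relevant chaos norms finite. Concretely, via the It\^o formula \eqref{equn:ItoFormula} together with the representation $Y_l(s)\eqinltwo W(\varphi_{s,l})$, each $\widetilde H_{\vec a}(\vec Y_{\vec s})$ is the multiple Wiener integral $I_{|\vec a|}\big(\mathrm{symm}(\varphi_{\vec s,1}^{\otimes a_1}\otimes\cdots\otimes\varphi_{\vec s,N_n}^{\otimes a_{N_n}})\big)$, so the inner integral $\int_{(0,m)^n}\widetilde H_{\vec a}(\vec Y_{\vec s})\,d\vec s$ is itself an $I_q$ of a kernel in $\mathfrak H^{\otimes q}$; this is what guarantees that the displayed expression is a bona fide chaos decomposition rather than a formal series.
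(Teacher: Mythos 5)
Your proof is correct, and its skeleton coincides with the paper's: expand the integrand pointwise in Hermite polynomials of the decorrelated vector $\vec Y_{\vec s}$, exploit the block-diagonal factorization \eqref{equn:lambda-rob} --- i.e.\ the independence of $\vec Y_{(1)}$ and $\vec Y_{(2)}$ --- to split the coefficients into the two stated Gaussian integrals, and then interchange the $d\vec s$-integration with the expansion. Where you genuinely diverge is in justifying that interchange, which is the technical heart of the paper's proof. The paper expands $f_1$ and $f_2$ separately, multiplies the two Hermite series almost surely, and then proves the $L^2$ identity by showing that the integrals $A_Q$ of the partial sums form a Cauchy sequence, an argument running through Cauchy--Schwarz on the cube, orthogonality of the chaoses, and a generalized Mehler formula (borrowed from \cite{Jose2014} and \cite{Az2009}). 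You instead view $\vec s \mapsto G_\sigma(\vec s)$ as an $L^2(\Omega)$-valued Bochner-integrable map and commute the bounded orthogonal projections onto each $\mathcal{H}_q$ with the Bochner integral; this is softer, avoids Mehler's formula entirely, and what it demands in exchange --- which you correctly supply --- is the identification of $\widetilde{H}_{\vec{a}}(\vec{Y}_{\vec s})$ as an element of $\mathcal{H}_{|\vec{a}|}$ via \eqref{equn:ItoFormula}, together with the (routine, but worth stating explicitly) facts that $\vec s \mapsto G_\sigma(\vec s)$ is strongly measurable and that the Bochner integral agrees a.s.\ with the pathwise integral defining $F^{\sigma}_{(0,m)^n}[X]$. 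A further small point in your favour: the statement's series starts at $q=1$, and you note that this requires centering; in fact for $n>1$ the $q=0$ coefficient vanishes outright, since by independence $d^{\sigma}_{\vec 0}=\E[\phi_{\sigma^2\mathbf{I}}(\nabla X)]\,\E[\det(\nabla^2 X)X]$ and the second factor is zero by the symmetry argument of Proposition \ref{prop:propDetX}, whereas the paper passes over this silently. Finally, the paper's proof also derives the boundary-face coefficients \eqref{equn:coefficientslongOtherFace} for later use; the lemma as stated does not require them, so their omission from your argument is immaterial.
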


	\begin{proof}
		Consider
		\beqq
			F^{\sigma}_{(0,m)^n}[X] =\int_{(0,m)^n}
			{\phi_{\sigma^2 \mathbf{I}}
			(\mathbf{\Lambda}_{(1)}^{(1/2)}\vec{Y}_{(1)})
			\det\left( \mathbf{M}_{\mathcal{I}}\left( \mathbf{\Lambda}^{(1/2)}_{(2)} \vec{Y}\right)\right)
			\vec V_{\left\{N_n - n\right\}}\left(\mathbf{\Lambda}_{(2)}^{(1/2)}\vec{Y}\right)
			d \vec s}.
		\eeqq 
		Take $f_1: \mathbb{R}^n \to \mathbb{R}$ and
		$f_2: \mathbb{R}^{{N_n}-n} \to \mathbb{R}$ defined by
		\beq\label{equn:f2}
			f_1(\vec{v}) = \phi_{\sigma^2 \mathbf{I}}(\mathbf{\Lambda}_{(1)}^{(1/2)}\vec{v}),
			\quad  \quad
			f_2(\vec{u}) = \det\left( \mathbf{M}_{\mathcal{I}}\left( \mathbf{\Lambda}^{(1/2)}_{(2)} \vec{u}\right)\right)
			\vec V_{\left\{N_n - n\right\}}\left(\mathbf{\Lambda}_{(2)}^{(1/2)}\vec{u}\right).
		\eeq

		Then, $f_1 \in
		L^2(\mathbb{R}^n, \prod_{i=1}^{n}\phi(\vec u)d\vec u),~ \vec u \in \mathbb{R}^n$
		and $f_2 \in L^2(\mathbb{R}^{{N_n}-n}, \prod_{i=1}^{{N_n}-n}\phi(\vec u)d\vec u),~ \vec u \in
		\mathbb{R}^{{N_n}-n}$.
		Write the Hermite expansions for $f_1, f_2$ :
		\beqq
			f_1(\vec{u}) &=& \sum_{a_1,\cdots, a_n}
			d^{\sigma}_{a_1\cdots a_n}
			\prod_{i=1}^{n}H_{a_i}(u_i),\\
			f_2(\vec{v}) &=& \sum_{a_{n+1},\cdots, a_{{N_n}}}
			d_{a_{n+1}\cdots a_{{N_n}}}\prod_{i=1}^{{N_n}-n}H_{a_{n+i}}(v_i).
		\eeqq
		where
		\beqq
			d^{\sigma}_{a_1\cdots a_n} =
			\frac{1}{a_1!\cdots a_n!}
			\int_{\mathbb{R}^n}\phi_{\sigma^2 \mathbf{I}}(\mathbf{\Lambda}_{(1)}^{(1/2)}\vec{v})
			\prod_{i=1}^{n} H_{a_i}(v_i)\phi(v_i)d\vec{v},
		\eeqq
		and
		\begin{align*}
			d_{a_{n+1}\cdots a_{{N_n}}} =
			\frac{1}{a_{n+1}!\cdots a_{{N_n}}!}
			\int_{\mathbb{R}^{({N_n}-n)}}
			\det\left( \mathbf{M}_{\mathcal{I}}\left( \mathbf{\Lambda}^{(1/2)}_{(2)} \vec{u}\right)\right)
			\vec V_{\left\{N_n - n\right\}}&\left(\mathbf{\Lambda}_{(2)}^{(1/2)}\vec{u}\right)\\
			&\times\prod_{i={n+1}}^{{N_n}} H_{a_{i}}(u_i)\phi(u_i)d\vec{u}.
		\end{align*}

		Then, with probability one, we have
		\beqq
			f_1(\vec{Y}_{(1)})
			f_2(\vec{Y}_{(2)})
			= \sum_{a_1,\cdots, a_{{N_n}}}
			d^{\sigma}_{a_1\cdots a_n}
			d_{a_{n+1}\cdots a_{{N_n}}}
			\prod_{i=1}^{{N_n}}H_{a_i}(Y_i).
		\eeqq
		Re-arranging the sum gives
		\beqq
			f_1(\vec{Y}_{(1)})
			f_2(\vec{Y}_{(2)})
			&=&  \sum_{q=0}^{\infty}
			\sum_{\{a_i\} \in \pi_n(q)}
			d^{\sigma}_{a_1\cdots a_n}
			d_{a_{n+1}\cdots a_{{N_n}}}
			\prod_{i=1}^{{N_n}}H_{a_i}(Y_i)\\
%
			&=&
			\sum_{q=0}^{\infty}
			\sum_{\vec{a} \in \pi_n(q)}
			d^{\sigma}_{\vec{a}}
			\widetilde{H}_{\vec{a}}(\vec{Y}_{\vec s})d\vec s,
		\eeqq
		yielding
		\beqq
			F^{\sigma}_{(0,m)^n}[X] \overset{a.s.}{=}\int_{(0,m)^n}
			\sum_{q=0}^{\infty}
			\sum_{\vec{a} \in \pi_n(q)}
			d^{\sigma}_{\vec{a}}
			\widetilde{H}_{\vec{a}}(\vec{Y}_{\vec s})d\vec s.
		\eeqq
		To deduce the $L^2$ equality, write
		\beqq
			A_Q = \int_{(0,m)^n}
			\sum_{q=0}^{Q}
			\sum_{\vec{a} \in \pi_n(q)}
			d^{\sigma}_{\vec{a}}
			\widetilde{H}_{\vec{a}}(\vec{Y}_{\vec s})d\vec s.
		\eeqq
		Now note that the sequence $\{A_Q\}_{Q=1}^{\infty}$ is Cauchy. To prove this, note first that 
				\beqq
			\|A_{Q_1} -A_{Q_2}\|^2 &=&\E \left[\int_{(0,m)^n}
			\sum_{q=Q_1}^{Q_2}
			\sum_{\pi_n(q)}
			d^{\sigma}_{a_1\cdots a_n}
			d_{a_{n+1}\cdots a_{{N_n}}}
			\widetilde{H}_{\vec{a}}(\vec{Y}_{\vec s})d\vec s
			\right]^2 \\
			&\leq& m^n \int_{(0,m)^n}\E
			\left[
			\sum_{q=Q_1}^{Q_2}
			\sum_{\pi_n(q)}
			d^{\sigma}_{a_1\cdots a_n}
			d_{a_{n+1}\cdots a_{{N_n}}}
			\widetilde{H}_{\vec{a}}(\vec{Y}_{\vec s})d\vec s
			\right]^2  \\
					&\leq& m^n \int_{(0,m)^n}
			\sum_{q=Q_1}^{Q_2}\E
			\left[
			\sum_{\pi_n(q)}
			d^{\sigma}_{a_1\cdots a_n}
			d_{a_{n+1}\cdots a_{{N_n}}}
			\widetilde{H}_{\vec{a}}(\vec{Y}_{\vec s})d\vec s
			\right]^2.
		\eeqq
			where the last inequality here follows from the orthogonality of spaces $\mathcal{H}_q$.	
			
		Exploiting  the independence of the components of $\vec{Y}$, and
	 applying a generalized Mehler's formula (see proof of proposition 2.1 in \cite{Jose2014} and Lemma 10.7 in \cite{Az2009}), we can bound the above expression by
		\beqq       
			m^{2n}
			\sum_{q=Q_1}^{Q_2}
			\sum_{\pi_n(q)}
			d^{2,{\sigma}}_{a_1\cdots a_{{N_n}}}
			a_1!\cdots a_{{N_n}}!\ .
		\eeqq
		By convergence of the coefficients of the Hermite expansion,  the above tends to zero when $Q_1, Q_2$ increase, and so we have that $\{A_Q\}_{Q=1}^{\infty}$ is Cauchy.

				For the other faces, $J \in \{J\} \backslash T^{\circ}_n$ of dimension $d \equiv$ dim$(J)<n$,  we have
		a slightly different expression for the coefficients $d^{\sigma}_{\vec{a}}$ in \eqref{equn:PhiExp1}.
		Recall  \eqref{equn:GApprox}, from which it follows, similarly to the above, that the corresponding integrands are
		given by
		\beq\label{equn:f1Face}
			f_1(\vec{v}) =
			\phi_{\sigma^2 \mathbf{I}_{n\times n}}\left(\vec V_{\mathcal{I}_{J}}\left(\mathbf{\Lambda}_{(1)}^{(1/2)}\vec{v}\right)\right)
			\mathbbm{1}_{\left\{ \left\langle  \vec V_{\mathcal{I}_{\perp J}} \left(\mathbf{\Lambda}_{(1)}^{(1/2)}\vec{v}\right), \widetilde{\eta}_{J}\right\rangle \geq 0 \right\}},
		\eeq
		\beq\label{equn:f2Face}
			f_2(\vec{u}) = \det\left(   \mathbf{M}_{\mathcal{I}_{J}}\left( \mathbf{\Lambda}^{(1/2)}_{(2)} \vec{u}\right)\right)
			\vec  V_{\left\{N_n - n\right\}} \left(\mathbf{\Lambda}_{(2)}^{(1/2)}\vec{Y}_{(2)}\right).
		\eeq
		Thus, in terms of $\vec Y$    we have that
		\begin{align}\label{equn:coefficientslongOtherFace}
			d^{\sigma}_{\vec{a}|J} =
			\frac{1}{\vec{a}!}
			&\int_{\mathbb{R}^n}
			\phi_{\sigma^2 \mathbf{I}_{k\times k}}\left(\vec V_{\mathcal{I}_{J}}\left(\mathbf{\Lambda}_{(1)}^{(1/2)}\vec{v}\right)\right)
			\mathbbm{1}_{\left\{ \left\langle  \vec V_{\mathcal{I}_{\perp J}} \left(\mathbf{\Lambda}_{(1)}^{(1/2)}\vec{v}\right), \widetilde{\eta}_{J}\right\rangle \geq 0 \right\}}
			\prod_{i=1}^{n} H_{a_i}(v_i)\phi(v_i)d \vec v \notag \\
			~   \times
			&\int_{\mathbb{R}^{(N_k -k)}}
			\det\left(  \mathbf{M}_{\mathcal{I}_{J}}\left( \mathbf{\Lambda}^{(1/2)}_{(2)} \vec{u}\right)\right)
			\vec  V_{\left\{N_n - n\right\}} \left(\mathbf{\Lambda}_{(2)}^{(1/2)}\vec u\right)
			\prod_{i=1}^{N_k -k} H_{a_{n+i}}(u_i)\phi(u_i)
			d \vec u,
		\end{align}
		where $\vec{a}! = {a_1}!\cdots{a}_{n+\frac{1}{2}k(k+1)}!$,  and the second integral is evaluated 
		over the $N_k -k$ coordinates of $\vec u$
		which appear in $\mathbf{M}_{\mathcal{I}_{J}}\left( \mathbf{\Lambda}^{(1/2)}_{(2)} \vec{u}\right)$  and the one coordinate that appears
		in  $V_{\left\{N_n - n\right\}} \left(\mathbf{\Lambda}_{(2)}^{(1/2)}\vec u\right)$.
	\end{proof}

	\begin{lemma}\label{lemma:lemma3}
		$F_{(0,m)^n}[X]$
		admits the Wiener chaos expansion
		\beq\label{equn:PhiExp}
			&F_{(0,m)^n}[X] \ \eqinltwo\
			\sum_{q=1}^{\infty}
			\sum_{\pi_n(q)}
			d_{\vec{a}}\int_{(0,m)^n}{
			\widetilde{H}_{\vec{a}}(\vec{Y}_{\vec s})d\vec s}.
		\eeq
		Alternatively,
		\beq
			F_{(0,m)^n}[X] \eqinltwo \sum_{q=1}^{\infty}I_q(f_q^m).
		\eeq
		The variance, $\sigma^2_m$, of $F_{(0,m)^n}[X]$ is given by
		\beq
			\sigma^2_m =
			\sum_{q=1}^{\infty}
			\sum_{\vec{a} \in \pi_n(q)}
			\sum_{\vec{b} \in \pi_n(q)}
			d_{\vec{a}}\
			d_{\vec{b}}
			\ \vec{a}!\ \vec{b}!\ R^m(\vec{a},\vec{b}),
		\eeq
		where $f_q^m \in \mathfrak{H}^{\odot q} $ is given by 
		\beqq
			f_q^m
			= \sum_{\vec{a} \in \pi_n(q)}
			d_{\vec{a}}\int_{(0,m)^n}
			\textnormal{symm}\left(\varphi_{\vec s,1}^{\otimes a_1}\otimes\cdots\otimes\varphi_{\vec s,{N_n}}^{\otimes a_{{N_n}}}\right)d\vec s,
		\eeqq
		and the various coefficients are as follows:			
		\begin{align}     \label{equn:for_mean_calc}
			d_{\vec{a}} =&
			\frac{|\det({\mathbf{\Lambda}^{(1/2)}_{(1)}})|^{-(1/2)}}{\vec{a}!(2\pi)^{{n/2}}}
			\prod_{i=1}^{n} H_{a_i}(0) \notag \\
			&\times
			\int_{\mathbb{R}^{({N_n}-n)}}
			{
			\det\left( \mathbf{M}_{\mathcal{I}}\left( \mathbf{\Lambda}^{(1/2)}_{(2)} \vec{u}\right)\right)
			\vec V_{\left\{N_n + n\right\}}\left(\mathbf{\Lambda}_{(2)}^{(1/2)}\vec{u}\right)
			\prod_{i=1}^{{N_n}-n} H_{a_{n+i}}(u_i)\phi(u_i)d\vec u
			},
		\end{align}
		
		\beq
			R^m(\vec{a},\vec{b}) = m^n
			\int_{(-m,m)^n}
			\sum_{\substack{
			d_{ij}\geq 0 \\ \sum_i d_{ij}=a_j\\ \sum_j d_{ij}=b_i
			}
			}\vec{a}!\vec{b}!\prod_{1 \leq i,j \leq {N_n}} \frac{({K}_{ij}(\vec s))^{d_{ij}}}{(d_{ij})!}
			\prod_{1\leq k\leq n}\left(1-\frac{|\vec s_k|}{m}\right)
			d \vec s.
		\eeq
		and 
			\beq
			\sigma^2_m = \sum_{q=1}^{\infty}u_q^m, \quad\text{and}\quad
			u_q^m = \sum_{\vec{a} \in \pi_n(q)}
			\sum_{\vec{b} \in \pi_n(q)}
			d_{\vec{a}} \
			d_{\vec{b}}
			\ \vec{a}!\ \vec{b}!\ R^m(\vec{a},\vec{b}),
		\eeq	\end{lemma}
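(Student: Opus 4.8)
The plan is to derive the whole statement as the $\sigma\to0$ limit of Lemma \ref{lemma:lemma2} and then to recast the resulting Hermite expansion as a sum of multiple Wiener integrals. First I would identify the coefficients $d_{\vec a}$ in \eqref{equn:for_mean_calc} as $\lim_{\sigma\to0}d^\sigma_{\vec a}$, working from the explicit product formula for $d^\sigma_{\vec a}$ in Lemma \ref{lemma:lemma2}. The second factor there is independent of $\sigma$ and is carried through unchanged, so only the first factor, $\frac{1}{\vec a!}\int_{\mathbb R^n}\phi_{\sigma^2\mathbf I_{n\times n}}(\mathbf\Lambda^{(1/2)}_{(1)}\vec v)\prod_{i=1}^n H_{a_i}(v_i)\phi(v_i)\,d\vec v$, requires a limit. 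As $\sigma\to0$ the kernel $\phi_{\sigma^2\mathbf I}(\mathbf\Lambda^{(1/2)}_{(1)}\vec v)$ acts as an approximate identity concentrating on the solution set of $\mathbf\Lambda^{(1/2)}_{(1)}\vec v=\vec0$, i.e. at $\vec v=\vec0$; the Jacobian of the linear map $\mathbf\Lambda^{(1/2)}_{(1)}$ contributes the determinant prefactor, and evaluating the remaining integrand at the origin produces the $(2\pi)^{-n/2}\prod_{i=1}^n H_{a_i}(0)$ displayed in \eqref{equn:for_mean_calc}.

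Next I would promote this termwise convergence to the $L^2$ identity \eqref{equn:PhiExp}. By Lemma \ref{lemma:lemma1} we have $F^\sigma_{(0,m)^n}[X]\convinltwo F_{(0,m)^n}[X]$, and since the orthogonal projection of $L^2(\mathcal G,\mathbb R)$ onto each Wiener chaos $\mathcal H_q$ is continuous, the $q$-th chaos component of $F_{(0,m)^n}[X]$ is the $L^2$ limit of the corresponding component of $F^\sigma_{(0,m)^n}[X]$, which by the previous paragraph equals $\sum_{\vec a\in\pi_n(q)}d_{\vec a}\int_{(0,m)^n}\widetilde H_{\vec a}(\vec Y_{\vec s})\,d\vec s$ (the $q=0$ term being the constant mean, which is handled separately). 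The one genuinely delicate point, and the step I expect to be the main obstacle, is interchanging the limit $\sigma\to0$ with the infinite sum over $q$. I would control this using the uniform bound $\sup_\sigma\E[(F^\sigma_{(0,m)^n}[X])^2]<\infty$ obtained in part (b) of the proof of Lemma \ref{lemma:lemma1}: by orthogonality of the chaoses this bounds $\sup_\sigma\sum_{q\ge1}q!\,\|f^{m,\sigma}_q\|^2_{\mathfrak H^{\otimes q}}$, where $f^{m,\sigma}_q$ denotes the $q$-th kernel of $F^\sigma_{(0,m)^n}[X]$, so the high-order tails are uniformly negligible and dominated convergence applied to the coefficient sums lets the limit pass through, yielding \eqref{equn:PhiExp}.

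To obtain the multiple-integral form $F_{(0,m)^n}[X]\eqinltwo\sum_{q\ge1}I_q(f^m_q)$, I would invoke the isonormal representation $Y_l(\vec s)\eqinltwo W(\varphi_{\vec s,l})$ of Section \ref{Sec:isonormal} together with the product-to-integral identity \eqref{equn:ItoFormula}. For $\vec a\in\pi_n(q)$ these give $\widetilde H_{\vec a}(\vec Y_{\vec s})=I_q(\mathrm{symm}(\varphi_{\vec s,1}^{\otimes a_1}\otimes\cdots\otimes\varphi_{\vec s,N_n}^{\otimes a_{N_n}}))$, and pulling the deterministic integral over $(0,m)^n$ inside $I_q$ — a stochastic Fubini justified by the $L^2$-continuity of $I_q$ and the finiteness of $\|f^m_q\|_{\mathfrak H^{\otimes q}}$ — identifies the kernel $f^m_q\in\mathfrak H^{\odot q}$ exactly as stated.

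Finally, for the variance I would use chaos orthogonality and the isometry $\E[I_q(f)^2]=q!\,\|f\|^2_{\mathfrak H^{\otimes q}}$, so that the $q=0$ (mean) term drops out and $\sigma^2_m=\sum_{q\ge1}u^m_q$ with $u^m_q$ the second moment of the $q$-th chaos component. Expanding $u^m_q=\sum_{\vec a,\vec b\in\pi_n(q)}d_{\vec a}\,d_{\vec b}\int_{(0,m)^n}\int_{(0,m)^n}\E[\widetilde H_{\vec a}(\vec Y_{\vec s})\widetilde H_{\vec b}(\vec Y_{\vec t})]\,d\vec s\,d\vec t$ and applying the multivariate Mehler (diagram) formula — the same tool used in the proof of Lemma \ref{lemma:lemma2}, cf. Proposition 2.1 of \cite{Jose2014} and Lemma 10.7 of \cite{Az2009} — rewrites the inner expectation as the sum over nonnegative integer arrays $(d_{ij})$ satisfying the row- and column-sum constraints $\sum_i d_{ij}=a_j$ and $\sum_j d_{ij}=b_i$ of $\prod_{1\le i,j\le N_n}K_{ij}(\vec t-\vec s)^{d_{ij}}/(d_{ij})!$, weighted by the appropriate Hermite normalising constants. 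Changing variables from $(\vec s,\vec t)$ to $(\vec s,\vec\tau=\vec t-\vec s)$ and using the stationarity of $\vec Y$ reduces the $\vec s$-integral to the volume of the overlap of $[0,m]^n$ with its $\vec\tau$-translate, namely $\prod_k(m-|\tau_k|)=m^n\prod_k(1-|\tau_k|/m)$ for $\vec\tau\in(-m,m)^n$; matching the result against the definition of $R^m(\vec a,\vec b)$ then yields the stated formulas for $u^m_q$ and $\sigma^2_m$.
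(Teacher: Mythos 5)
Your overall route coincides with the paper's: pass to the $\sigma\to 0$ limit of the expansion of Lemma \ref{lemma:lemma2} using the $L^2$ convergence from Lemma \ref{lemma:lemma1}, identify $d_{\vec a}=\lim_{\sigma\to0}d^{\sigma}_{\vec a}$ (the Gaussian kernel concentrating at $\vec v=\vec 0$ and producing the $|\det(\mathbf{\Lambda}^{(1/2)}_{(1)})|$ prefactor and $\prod_i H_{a_i}(0)$), convert to multiple Wiener integrals via \eqref{equn:ItoFormula} together with a stochastic Fubini argument, and compute $\sigma^2_m$ by chaos orthogonality, the Mehler/diagram formula, stationarity, and the change of variables yielding the overlap factor $m^n\prod_k\left(1-|\nu_k|/m\right)$. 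All of these steps match the paper's proof.

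The one step that does not hold as you justify it is the interchange of $\lim_{\sigma\to0}$ with $\sum_q$, which you yourself single out as the main obstacle. You argue that $\sup_{\sigma}\sum_q q!\,\|f^{m,\sigma}_q\|^2_{\mathfrak{H}^{\otimes q}}<\infty$ (from part (b) of the proof of Lemma \ref{lemma:lemma1}) makes the high-order tails uniformly negligible; this implication is false in general, since uniform boundedness of the full sum does not prevent the chaos mass from drifting to ever higher orders as $\sigma\to0$ (imagine $f^{m,\sigma}_q$ carrying unit norm near $q\approx 1/\sigma$). Fortunately the step is not needed, because your own opening observation already suffices: each projection $P_q$ onto $\mathcal{H}_q$ is a contraction, so $F^{\sigma}_{(0,m)^n}[X]\convinltwo F_{(0,m)^n}[X]$ gives $P_qF^{\sigma}_{(0,m)^n}[X]\convinltwo P_qF_{(0,m)^n}[X]$, and since $\pi_n(q)$ is a finite index set and $d^{\sigma}_{\vec a}\to d_{\vec a}$, this identifies $P_qF_{(0,m)^n}[X]=\sum_{\vec a\in\pi_n(q)}d_{\vec a}\int_{(0,m)^n}\widetilde H_{\vec a}(\vec Y_{\vec s})\,d\vec s$. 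Then \eqref{equn:PhiExp} follows by applying the chaos decomposition \eqref{equn:chaos1} directly to $F_{(0,m)^n}[X]\in L^2(\mathcal{G},\mathbb{R})$, with no interchange of limits at all. Alternatively, if you insist on the interchange, the uniform tail smallness must be extracted from the norm convergence $\|F^{\sigma}_{(0,m)^n}[X]-F_{(0,m)^n}[X]\|_{L^2}\to0$ itself (this is what the paper's $\varepsilon/3$ argument does), not from the uniform moment bound alone.
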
 
	
	\begin{proof}
		By Lemma \ref{lemma:lemma1} and Lemma \ref{lemma:lemma2},
		it suffices to establish the $L^2$ convergence
		\beqq
			\sum_{q=0}^{\infty}
			\sum_{\vec{a} \in \pi_n(q)}
			d^{\sigma}_{\vec{a}}
			\int_{(0,m)^n}
			\widetilde{H}_{\vec{a}}(\vec{Y}_{\vec s})d\vec s
			\underset{\sigma \to 0}{\longrightarrow}
			\sum_{q=0}^{\infty}
			\sum_{\vec{a} \in \pi_n(q)}
			d_{\vec{a}}
			\int_{(0,m)^n}
			\widetilde{H}_{\vec{a}}(\vec{Y}_{\vec s})d\vec s.
		\eeqq
		It is straightforward that	$\lim\limits_{\sigma \to 0} d_{\vec{a}}^{\sigma}  \to d_{\vec{a}}$,
		where 

		\begin{align*}
			d_{\vec{a}} =&
			\frac{|\det({\mathbf{\Lambda}^{(1/2)}_{(1)}})|^{-(1/2)}}{\vec{a}!(2\pi)^{{n/2}}}
			\prod_{i=1}^{n} H_{a_i}(0)\\
			&\times
			\int_{\mathbb{R}^{({N_n}-n)}}
			{                
			\det\left( \mathbf{M}_{\mathcal{I}}\left( \mathbf{\Lambda}^{(1/2)}_{(2)} \vec{u}\right)\right)
			\vec V_{\left\{N_n - n\right\}}\left(\mathbf{\Lambda}_{(2)}^{(1/2)}\vec{u}\right)
			\prod_{i=1}^{{N_n}-n} H_{a_{n+i}}(u_i)\phi(u_i)d\vec u
			}.
		\end{align*}

		We start by showing that \eqref{equn:PhiExp} is in
		$L^2$. By Fatou's inequality
		\beqq
			\E\bigg{[}
				\sum_{q=0}^{Q}
				\sum_{\vec{a} \in \pi_n(q)}
				d_{\vec{a}}\int_{(0,m)^n}
				\widetilde{H}_{\vec{a}}(\vec{Y}_{\vec s})d\vec s
				\bigg{]}^2 &\leq&
				\varliminf\limits_{\sigma \to 0}
				\E \bigg{[}
				\sum_{q=0}^{Q}
				\sum_{\vec{a} \in \pi_n(q)}
				d^{\sigma}_{\vec{a}}\int_{(0,m)^n}
				\widetilde{H}_{\vec{a}}(\vec{Y}_{\vec s})d\vec s
			\bigg{]}^2\\
			&=&
			\varliminf\limits_{\sigma \to 0}
			\E
			\sum_{q=0}^{Q}
			\bigg{[}
			\sum_{\vec{a} \in \pi_n(q)}
			d^{\sigma}_{\vec{a}}\int_{(0,m)^n}
			\widetilde{H}_{\vec{a}}(\vec{Y}_{\vec s})d\vec s
			\bigg{]}^2,
		\eeqq
		the last line following from orthogonality.
		
		Adding some positive terms to the sum and then using Lemma \ref{lemma:lemma1}, the above is bounded by
			\beqq
			\varliminf\limits_{\sigma \to 0}
			\E
			\sum_{q=0}^{\infty}
			\bigg{[}
			\sum_{\vec{a} \in \pi_n(q)}
			d^{\sigma}_{\vec{a}}\int_{(0,m)^n}
			\widetilde{H}_{\vec{a}}(\vec{Y}_{\vec s})d\vec s
			\bigg{]}^2  = \E[F_{(0,m)^n}[X]]^2 < \infty.
		\eeqq
		We introduce yet another shorthand notation, to be used for the remaining part of the current proof. 
		\beqq
			\widetilde{I}^{\sigma}_{q} \definedas
			\sum_{\vec{a} \in \pi_n(q)}
			d^{\sigma}_{\vec{a}}\int_{(0,m)^n}
			\widetilde{H}_{\vec{a}}(\vec{Y}_{\vec s})d\vec s \quad\text{and} \quad
			\widetilde{I}_{q} \definedas
			\sum_{\vec{a} \in \pi_n(q)}
			d_{\vec{a}}\int_{(0,m)^n}
			\widetilde{H}_{\vec{a}}(\vec{Y}_{\vec s})d\vec s.
		\eeqq
		With the above notation we have
		\beqq
			F^{\sigma}_{(0,m)^n}[X] = \sum_{q=0}^{\infty}
			\widetilde{I}^{\sigma}_{q}
			\quad\text{and} \quad
			F_{(0,m)^n}[X] = \sum_{q=0}^{\infty}
			\widetilde{I}_{q}.
		\eeqq
		Since
		$\lim\limits_{\sigma \to 0} d_{\vec{a}}^{\sigma} \to d_{\vec{a}}$, we have, for a fixed $Q$,
		\beqq
			\lim_{\sigma \to 0}
			\bigg{\|}\sum_{q=0}^{Q}
			\widetilde{I}^{\sigma}_{q}
			\bigg{\|}_{L^2}
			=
			\bigg{\|}\sum_{q=0}^{Q}
			\widetilde{I}_{q}
			\bigg{\|}_{L^2}
			=
			\sum_{q=0}^{Q}
			\big{\|}
			\widetilde{I}_{q}
			\big{\|}_{L^2}.
		\eeqq
		Moreover, since $F_{(0,m)^n}[X], F_{(0,m)^n}^{\sigma}[X]\in L^2(\Omega)$,
		we have
		\beq\label{equn:ConvTo}
			\big{\|}F_{(0,m)^n}^{\sigma}[X]
			\big{\|}_{L^2}
			\underset{\sigma \to 0}{\to} \big{\|}
			F_{(0,m)^n}[X]  \big{\|}_{L^2}.
		\eeq
		Now,
		\begin{align*}
			\bigg{\|}
			\sum_{q=0}^{\infty}
			I_{q}
			- \sum_{q=0}^{\infty}
			I^{\sigma}_{q}
			\bigg{\|}_{L^2} \leq
			\bigg{\|}\sum_{q=0}^{Q}
			I_{q} -
			\sum_{q=0}^{Q}
			I^{\sigma}_{q}
			\bigg{\|}_{L^2}
			+
			\bigg{\|}
			\sum_{q=Q+1}^{\infty}
			I_{q}
			\bigg{\|}_{L^2}
			+
			\bigg{\|}
			\sum_{q=Q+1}^{\infty}
			I^{\sigma}_{q}
			\bigg{\|}_{L^2}.
		\end{align*}
		Given $\varepsilon>0$, we first choose $Q'$ sufficiently large so that
		\beq
			\bigg{\|}\sum_{q=Q'+1}^{\infty}
			I_{q}\bigg{\|}_{L^2} < \varepsilon/3 .
		\eeq
		Consequently, because of \eqref{equn:ConvTo}, we can then choose $\sigma$ sufficiently small  so that 
		\beq
			\bigg{\|}\sum_{q=Q'+1}^{\infty}
			I^{\sigma}_{q}\bigg{\|}_{L^2}
			< \varepsilon/3
			\quad \text{and}\quad
			\bigg{\|}\sum_{q=0}^{Q'}
			I_{q} -
			\sum_{q=0}^{Q'}
			I^{\sigma}_{q}
			\bigg{\|}_{L^2}
			\leq \varepsilon/3.
		\eeq
		Since $\|Y_i\| = 1$, we relay on the fundamental relation for the Wiener chaos 
		\beq
			\prod_{i=1}^{{N_n}} H_{a_i}(Y_i(\vec s))
			=I_{q}\left(\text{symm}(\varphi_{\vec s,1}^{\otimes a_1}\otimes
			\cdots\otimes\varphi_{\vec s,{N_n}}^{\otimes a_{{N_n}}})\right),
		\eeq
		to write the expansion for $F_{(0,m)^n}[X]$, and
		then apply Fubini's theorem for multiple Wiener integrals to arrive at
		\begin{align}\label{equn:fullcalc}
			F_{(0,m)^n}[X] &= \sum_{q=0}^{\infty}
			\sum_{\pi_n(q)}
			d_{a_1\cdots a_{{N_n}}}\int_{(0,m)^n}
			{\prod_{i=1}^{{N_n}} H_{a_i}(Y_i(\vec s))d\vec s }\notag \\
			&=\sum_{q=0}^{\infty}{I_{q}\left(
			\sum_{\vec{a} \in \pi_n(q)}
			d_{\vec{a}}\int_{(0,m)^n}
			\textnormal{symm}\left(\varphi_{\vec s,1}^{\otimes a_1}\otimes
			\cdots\otimes\varphi_{\vec s,{N_n}}^{\otimes a_{{N_n}}}\right)d\vec s \right)} \notag \\
			&= \sum_{q=1}^{\infty}I_q(f_q^m),
		\end{align}
		with
		\beqq
			f_q^m\left(\lambda_1,\cdots,\lambda_{q}\right)
			=
			\sum_{\vec{a} \in \pi_n(q)}
			d_{\vec{a}}\int_{(0,m)^n}
			\textnormal{symm}\left(\varphi_{\vec s,1}^{\otimes a_1}\otimes
			\cdots\otimes\varphi_{\vec s,{N_n}}^{\otimes a_{{N_n}}}\right)d\vec s,
		\eeqq
		where $\lambda_1,\ldots \lambda_q \in \mathbb{M}$, and $\mathbb{M}$ is as in \eqref{equn:space_M}.
		
		Next, we proceed to calculate the variance,
		$\sigma_m^2$. Using \eqref{equn:PhiExp} and the orthogonality of $\mathcal{H}_q$ for different $q$, we have
		\begin{align}
			\sigma_m^2 = \,
			&\E\left[
			\sum_{q=0}^{\infty}
			\sum_{\vec{a} \in \pi_n(q)}
			d_{\vec{a}}
			\int_{\small{(0,m)^n}}
			\widetilde{H}_{\vec{a}}(\vec{Y}_{\vec s})d\vec s
			\right]^2
			\notag  \\
			&=\sum_{q=0}^{\infty}
			\sum_{\vec{a} \in \pi_n(q)}
			\sum_{\vec{b} \in \pi_n(q)}
			d_{\vec{a}}
			d_{\vec{b}}
			\int \int_{(0,m)^{2n}}
			\E\left[\widetilde{H}_{\vec{a}}(\vec{Y}_{\vec s})
			\widetilde{H}_{\vec{b}}(\vec{Y}_{\vec u})
			\right]d\vec sd\vec u.
		\end{align}
		By stationarity, this equals
		\beqq
			\sum_{q=0}^{\infty}
			\sum_{\vec{a} \in \pi_n(q)}
			\sum_{\vec{b} \in \pi_n(q)}
			d_{\vec{a}}
			d_{\vec{b}}
			\int \int_{(0,m)^{2n}}
			\E\left[\prod_{i=1}^{{N_n}} H_{a_i}(Y_i(0))
			\prod_{i=1}^{{N_n}} H_{b_i}(Y_i(\vec u-\vec s))
			\right]d\vec sd\vec u.
		\eeqq
		Then, a change in variables leads to
		\beqq
			m^n\sum_{q=0}^{\infty}
			\sum_{\vec{a} \in \pi_n(q)}
			\sum_{\vec{b} \in \pi_n(q)}
			d_{\vec{a}}
			d_{\vec{b}}
			\int_{(-m,m)^n}
			\E\left[\prod_{i=1}^{{N_n}} H_{a_i}(Y_i(0))
			\prod_{i=1}^{{N_n}} H_{b_i}(Y_i(\vec \nu))\right]
			\prod_{1\leq k\leq n}\left(1-\frac{|\nu_k|}{m}\right)
			d\vec \nu.
		\eeqq
		When $|\vec{a}| = |\vec{b}|$, we have [see Proposition 2.2.1 in \cite{Nourdin2012}]
		\beqq \label{equn:innerpart}
			\E[\prod_{i=1}^{{N_n}} H_{a_i}(Y_i(0))
			\prod_{i=1}^{{N_n}} H_{b_i}(Y_i(\nu))]
			= \vec{a}!\vec{b}!
			\sum_{\substack{
			d_{ij}\ \geq \ 0 \\ \sum_i d_{ij}\ =\ a_j\\ \sum_j d_{ij}\ =\ b_i
			}
			}
			\prod_{1 \leq i, j \leq {N_n}} \frac{({K}_{ij}(\vec \nu))^{d_{ij}}}{(d_{ij})!},
		\eeqq
		and zero otherwise. 
		Thus, writing
		\beqq
			R^m(\vec{a},\vec{b}) = {m^n}
			\int_{(-m,m)^n}\vec{a}!\vec{b}!
			\sum_{\substack{
			d_{ij}\ \geq \ 0 \\ \sum_i d_{ij}\ =\ a_j\\ \sum_j d_{ij}\ =\ b_i
			}
			}\prod_{1 \leq i,j \leq {N_n}} \frac{({K}_{ij}(\vec \nu))^{d_{ij}}}{(d_{ij})!}
			\prod_{1\leq k\leq n}\left(1-\frac{|\nu_k|}{m}\right)
			d\vec \nu,
		\eeqq
		the variance is given by
		\beqq
			\sigma_m^2=
			\sum_{q=1}^{\infty}
			\sum_{\vec{a} \in \pi_n(q)}
			\sum_{\vec{b} \in \pi_n(q)}
			d_{\vec{a}}
			d_{\vec{b}}\
			\vec{a}!\
			\vec{b}!\
			R^m(\vec{a},\vec{b})
			= \sum_{q=1}^{\infty}u_q^m,
		\eeqq
		where $u_q^m\geq 0$ is given by
		\beqq
			u_q^m =   \sum_{\vec{a} \in \pi_n(q)}
			\sum_{\vec{b} \in \pi_n(q)}
			d_{\vec{a}}\
			d_{\vec{b}} \
			\vec{a}!\
			\vec{b}!\
			R^m(\vec{a},\vec{b}),
		\eeqq
		and we are done.
	\end{proof}


	\begin{lemma}\label{lemma:lemma4}
		The coefficients in \eqref{equn:PhiExp} satisfy
		\beq\label{equn:CondNorm}
			\sum\limits_{\vec{a}\in \pi_n(q)}
			d^2_{\vec{a}}\vec{a}!
			< C q^n.
		\eeq
	\end{lemma}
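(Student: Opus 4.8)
The plan is to exploit the product structure of the coefficients $d_{\vec a}$ in \eqref{equn:for_mean_calc}. Split the multi-index as $\vec a = (\vec a^{(1)},\vec a^{(2)})$, where $\vec a^{(1)}=(a_1,\dots,a_n)$ indexes the gradient block and $\vec a^{(2)}=(a_{n+1},\dots,a_{N_n})$ indexes the value-and-Hessian block. Then \eqref{equn:for_mean_calc} factorizes as $d_{\vec a} = C_0\,(\vec a!)^{-1}\big(\prod_{i=1}^n H_{a_i}(0)\big)\,J(\vec a^{(2)})$, where $C_0 = |\det(\mathbf{\Lambda}^{(1/2)}_{(1)})|^{-1/2}(2\pi)^{-n/2}$ is finite (the non-degeneracy condition (i) of tameness makes $\mathbf{\Lambda}^{(1/2)}_{(1)}$ invertible) and $J(\vec a^{(2)})$ is the integral over $\mathbb{R}^{N_n-n}$ appearing in \eqref{equn:for_mean_calc}. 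Hence $d^2_{\vec a}\,\vec a! = C_0^2\,(\vec a!)^{-1}\prod_{i=1}^n H_{a_i}(0)^2\,J(\vec a^{(2)})^2$, and since $(\vec a!)^{-1}=\prod_{i=1}^n (a_i!)^{-1}\prod_{i>n}(a_i!)^{-1}$ the summand factorizes across the two blocks.

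The decisive observation is that $J(\vec a^{(2)})$ is, up to the normalization just separated off, the $\vec a^{(2)}$-th Hermite coefficient of $f_2(\vec u)=\det\big(\mathbf{M}_{\mathcal I}(\mathbf{\Lambda}^{(1/2)}_{(2)}\vec u)\big)\,\vec V_{\{N_n-n\}}(\mathbf{\Lambda}^{(1/2)}_{(2)}\vec u)$ against the standard Gaussian measure on $\mathbb{R}^{N_n-n}$. The entries of $\mathbf{M}_{\mathcal I}(\mathbf{\Lambda}^{(1/2)}_{(2)}\vec u)$ are linear in $\vec u$, so the $n\times n$ determinant is a polynomial of degree $n$, while $\vec V_{\{N_n-n\}}(\mathbf{\Lambda}^{(1/2)}_{(2)}\vec u)$ extracts the value component and is linear; thus $f_2$ is a polynomial of degree $n+1$. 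Its Hermite expansion therefore terminates, so $J(\vec a^{(2)})=0$ whenever $|\vec a^{(2)}|>n+1$. Since $f_2$ lies in $L^2$ of the Gaussian measure, the finitely many nonzero $J(\vec a^{(2)})$ are bounded and $C_J \definedas \sum_{|\vec a^{(2)}|\le n+1}\big(\prod_{i>n}a_i!\big)^{-1}J(\vec a^{(2)})^2<\infty$.

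It then remains to control the gradient block. With $p \definedas q-|\vec a^{(2)}|\ge q-(n+1)$ and $S(p)\definedas \sum_{|\vec a^{(1)}|=p}\prod_{i=1}^n H_{a_i}(0)^2/a_i!$, the left-hand side of \eqref{equn:CondNorm} equals $C_0^2\sum_{|\vec a^{(2)}|\le n+1}\big(\prod_{i>n}a_i!\big)^{-1}J(\vec a^{(2)})^2\,S(q-|\vec a^{(2)}|)$. Because $H_a(0)=0$ for odd $a$ and $H_{2k}(0)^2/(2k)! = 4^{-k}\binom{2k}{k}\in(0,1]$, every term of $S(p)$ is at most $1$, while the number of compositions of $p$ into $n$ even nonnegative parts is $\binom{p/2+n-1}{n-1}=O(p^{n-1})$; hence $S(p)\le C\,p^{n-1}$. (A sharper rate comes from $\sum_k 4^{-k}\binom{2k}{k}x^k=(1-x)^{-1/2}$, whence $S(2P)=[x^P](1-x)^{-n/2}\asymp P^{n/2-1}$, but the crude bound suffices.) Since $|\vec a^{(2)}|\le n+1$ is bounded, $S(q-|\vec a^{(2)}|)\le C\,q^{n-1}$, and summing against the finite factor $C_J$ gives the left-hand side of \eqref{equn:CondNorm} $\le C'q^{n-1}<Cq^n$. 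The one genuinely substantive step is the polynomial-degree argument of the second paragraph, which collapses the effective number of free indices from $N_n$ down to $n$; the rest is an elementary count of even compositions together with the known values of $H_a(0)$.
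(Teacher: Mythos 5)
Your proof is correct and takes essentially the same route as the paper's: factorize $d_{\vec a}$ into the gradient block and the value--Hessian block, observe that $f_2$ is a polynomial so its Hermite expansion terminates (collapsing the second block to finitely many nonzero coefficients), bound $H_{a}(0)^2/a!$ by a constant, and count the remaining multi-indices. Your version is marginally sharper --- you identify the degree of $f_2$ as $n+1$ and exploit $H_a(0)=0$ for odd $a$ to get an $O(q^{n-1})$ count where the paper settles for $O(q^n)$ --- but the proof skeleton is identical.
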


	\begin{proof}
		The coefficients are
		\beqq
			d_{a_1\ \cdots \ a_n} =
			\frac{|{\det}({\mathbf{\Lambda}^{(1/2)}_{(1)}})|^{-(1/2)}}{(2\pi)^{{n/2}}a_1!\cdots a_n!}\prod_{i=1}^{n}H_{a_i}(0),
		\eeqq
		\beqq
			d_{a_{n+1}\ \cdots\ a_{{N_n}}} = \frac{1}{a_{n+1}!\cdots a_{{N_n}}!}
			\int_{R^{{N_n}-n}}f_2(\vec s)
			\prod_{i=1}^{{N_n}-n}H_{a_{n+i}}(s_i)\phi{(s_i)}d\vec s,
		\eeqq
		with $f_2$ as defined in \eqref{equn:f2}.
		It is straightforward to see that $f_2(\vec s)$ is a polynomial
		of degree ${N_n}- n + 1$ and thus has a finite Hermite polynomial
		expansion. This means that all the terms
		$d_{a_{n+1}\cdots a_{N_n}}$ with any of the indexes
		$a_i > {N_n}-n+1$, $ i \in \{(n+1), \ldots , {N_n}\}$ are zero.
		Setting
		\beqq
		C \definedas |{\det}({\mathbf{\Lambda}^{(1/2)}_{(1)}})|^{-(1/2)} \times \max_{a_{n+1},\ldots a_{{N_n}}}{(d^2_{a_{n+1}\cdots a_{N_n}}a_{n+1}!\cdots a_{N_n}!)},
		\eeqq
		gives
		\beqq
			\sum_{\pi_n(q)}d^2_{a_1 +\ \cdots \ + a_{{N_n}}}a_1!\cdots a_{{N_n}}!\leq C \sum\limits_{a_1,\ldots,a_n \in \pi_n(q)}
			\bigg{(}
			\frac{H_{a_i}(0)\cdots H_{a_n}(0)}{(2\pi)^{{n/2}}a_1!\cdots a_n!}
			\bigg{)}^2a_1! \cdots a_n!.
		\eeqq
		Using \cite{Imkeller95}, Proposition 3, we have $\left|\frac{(H_{a_i}(0))^2}{{a_i}!}\right| \leq C$
		and thus
		\beqq
			\sum_{\pi_n(q)}d^2_{a_1 +\ \cdots \ + a_{{N_n}}}a_1!\cdots a_{{N_n}}!\leq  C \sum\limits_{a_1,\ldots,a_n \in\pi_n(q)}{1} < C q^n.
		\eeqq
		\normalfont
		For other faces $J \in \{J\}\backslash T^{\circ}_n$, by \eqref{equn:GApprox} the 
		contribution to the Euler integral of face $J$ is given by the limits of expressions of the form 
		\beqq
			\int_{J}
			\phi_{\sigma^2 \mathbf{I}}{(\nabla{{X}_{|J}(\vec s)})\det(\nabla^2{X}_{|J}(\vec s))X(\vec s)				
			\mathbbm{1}_{\{ \langle \nabla X(\vec{s}), \vec{\eta}_J\rangle \geq 0\}}
			d\vec s },					
		\eeqq
		which are bounded by
		\beqq
			\int_{J}
			\phi_{\sigma^2 \mathbf{I}}{(\nabla{{X}_{|J}(\vec s)})\left(1+\left(\det(\nabla^2{X}_{|J}(\vec s))\right)^2\right)\left(1+ (X_{|J}(\vec{s}))^2\right)d\vec s }.
		\eeqq
		Although, the functions in the bound are slightly different to the corresponding functions in the previous development for the contribution of $T_n^{\circ}$, the remainder of the argument is essentially the same, and so we shall not write out the details.
	\end{proof}

	\subsection{Proof of  Theorem \ref{theorem:main}}
	   
   		As previously shown in Lemma \ref{lemma:lemma1}, the sum corresponding to the interior critical points
		can be written as the limit of
		\beq\label{equn:TheExpression}
			F_{(0,m)^n}^{\sigma}[X] =
			\int_{(0,m)^n}
			{\phi_{\sigma^2 \mathbf{I}}(\nabla{X}(\vec s))\det(\nabla^2{X}(\vec s))X(\vec s)d\vec s}.
		\eeq
		By Lemmas  \ref{lemma:lemma1} and  \ref{lemma:lemma3} we have the Wiener chaos representation
		of \eqref{equn:TheExpression}
		\begin{align}
			F_{(0,m)^n}[X] = \sum_{q=1}^{\infty}I_q(f_q^m).
		\end{align}
		We, therefore, need to establish a CLT for  
		\beqq
			\frac{1}{m^{n/2}}\sum_{q=1}^{\infty}I_q(f_q^m).
		\eeqq
		The CLT will follow immediately  from  Theorem
		\ref{theorem:CLTTheoremFirst} once we have checked that all the conditions of the theorem hold in our case.      		
	
		We start with Condition {\it{(}c\it{)}}  of Theorem
		\ref{theorem:CLTTheoremFirst}. Conditions {\it{(}a\it{)}} and {\it{(}b\it{)}} will be deduced from our previous results and the
		proof of {\it{(}d\it{)}}. As far as {\it (c)} is concerned, note firstly that, by Lemma \ref{lemma:lemma3}, we
		have
		\beqq
			f_q^m\left(\lambda_1,\cdots,\lambda_{q}\right)
			=
			\sum_{\vec{a} \in \pi_n(q)}
			d_{\vec{a}}\int_{(0,m)^n}
			\textnormal{symm}\left(\varphi_{\vec s,1}^{\otimes a_1}\otimes\cdots\otimes\varphi_{\vec s,{N_n}}^{\otimes a_{{N_n}}}\right)d\vec s,
		\eeqq
		where
		\beqq
			\textnormal{symm}\left(\varphi_{\vec s,1}^{\otimes a_1}\otimes\cdots\otimes\varphi_{\vec s,{N_n}}^{\otimes a_{{N_n}}}\right)=
			\frac{1}{q!}\sum_{\sigma}\varphi_{\vec s,1}^{\otimes a_1}\otimes\cdots\otimes\varphi_{\vec s,{N_n}}^{\otimes a_{{N_n}}}\left(\lambda_{\sigma(1)},\cdots,\lambda_{\sigma(q)}\right).
		\eeqq
		Since $|\vec{a}| = q$, the inner sum can be written as
		\beq\label{equn:SumOfPhi}
			\sum_{j_1,\ldots,j_q=1}^{{N_n}} c_{j_1,\ldots,j_q}\varphi_{s,j_1}\otimes\cdots\otimes\varphi_{s,j_q}
		\eeq
		with the appropriate coefficients
		$\{c_{j_1,\ldots,j_q}\}^{{N_n}}_{j_1,\ldots,j_q=1}$, such that 
		$c_{j_1,\ldots,j_q} = 0$, whenever $\sum_i {j_i} \neq q$. Take
		$C(q) = \max_{j_1,\ldots,j_q}\{c_{j_1,\ldots,j_q}\}$. Using the
		above notation, we write
		\begin{multline*}
			\frac{1}{m^{n}}(f_q^m\otimes_{r}f_q^m) =
			\frac{1}{m^{n}}
			\sum_{\vec{a} \in \pi_n(q)}
			\sum_{\vec{b} \in \pi_n(q)}
			d_{\vec{a}}
			d_{\vec{b}} \\
			\times \int_{(0,m)^{2n}}
			\sum_{
			\substack{
			{j_1,\ldots,j_q}  \\
			{k_1,\ldots,k_q}
			}}
			c_{j_1,\ldots,j_q}
			c_{k_1,\ldots,k_q}
			\left(\varphi_{\vec s,j_1}\otimes\cdots\otimes\varphi_{\vec s,j_q}\right)
			\otimes_r
			\left(\varphi_{\vec u,k_1}\otimes\cdots\otimes\varphi_{\vec u,k_q}\right)
			d\vec sd\vec u.
		\end{multline*}
		The following is true (see Section \ref{subsection:Operations})
		\begin{multline*}
			\left(\varphi_{\vec x,j_1}\otimes\cdots\otimes\varphi_{\vec x,j_q}\right)
			\otimes_r
			\left(\varphi_{\vec y,k_1}\otimes\cdots\otimes\varphi_{\vec y,k_q}\right)
			\\
			= \left[ \prod_{l=1}^{r}\langle\varphi_{\vec x,j_l},\varphi_{\vec y,k_l}\rangle_{\mathfrak{H}}\right]
			\varphi_{\vec x,j_{r+1}} \otimes \cdots \varphi_{\vec x,j_{q}} 
			\otimes \varphi_{\vec y,k_{r+1}} \otimes \cdots \varphi_{
			\vec y,k_{q}} \\
			=\prod_{l=1}^{r}{K}_{j_i,k_l}(\vec x-\vec y)
			\varphi_{\vec x,j_{r+1}} \otimes \cdots \varphi_{
			\vec x,j_{q}} 
			\otimes \varphi_{\vec y,k_{r+1}} \otimes \cdots \varphi_{\vec y,k_{q}} ,
		\end{multline*}
		and
		\begin{multline*}
			\big{\langle}
			\varphi_{\vec x,j_{r+1}} \otimes \cdots \varphi_{\vec x,j_{q}} 
			\otimes \varphi_{\vec y,k_{r+1}} \otimes \cdots \varphi_{\vec y,k_{q}} 
			, 
			\varphi_{\vec w,i_{r+1}} \otimes \cdots \varphi_{\vec w,i_{q}} 
			\otimes \varphi_{\vec z,m_{r+1}} \otimes \cdots \varphi_{\vec z,m_{q}} 
			\big{\rangle}_{\mathfrak{H}^{\otimes2q}}\\
			= \prod_{l=1}^{q-r}{K}_{j_l,i_l}(\vec x-\vec w)\prod_{l=1}^{q-r}{K}_{k_l,m_l}(\vec y-\vec z).
		\end{multline*}
		We have
		\beqq
			\prod_{l=1}^{r}{K}_{j_i,k_l}(\vec x-\vec y) \leq \psi^{r}(\vec x-\vec y),
		\eeqq
		and 
		\beqq
			\prod_{l=1}^{q-r}{K}_{j_i,i_l}(\vec x-\vec w)\prod_{l=1}^{q-r}{K}_{k_l,m_l}(\vec y-\vec z) \leq
			\psi^{q-r}(\vec x- \vec w)\psi^{q-r}(\vec y-\vec z).
		\eeqq
		Since the number of summands in \eqref{equn:SumOfPhi} is less then $({N_n})^q$, we have that
				\begin{multline*}
			{m^{-n}}\|f_q^m\otimes_{r}f_q^m\|^{2}_{\mathfrak{H}^{\otimes2q}} = {m^{-n}}\langle f_q^m\otimes_{r}f_q^m, f_q^m\otimes_{r}f_q^m \rangle_{\mathfrak{H}} \leq
			{m^{-n}}
			\left(
			\sum_{\vec{a} \in \pi(q)} d^2_{\vec{a}}
			\right) 
			\left(({N_n})^q C(q)\right)^2\mathcal{Z}(t),
		\end{multline*}
		where
		\beqq
			\mathcal{Z}(m) = \int_{(0,m)^{4n}}\psi^{r}(\vec x-\vec y)\psi^{r}(\vec w-\vec z)\psi^{q-r}(\vec x-\vec w)\psi^{q-r}(\vec y-\vec z)d\vec xd\vec yd\vec wd\vec z.
		\eeqq
		Next, using $\psi^{r}(\vec x-\vec y)\psi^{q-r}(\vec x-\vec w)\leq \psi^{q}(\vec x-\vec y) + \psi^{q}(\vec x-\vec w)$ we have 
		\beqq
			\mathcal{Z}(m) \leq \mathcal{Z}_1(m) + \mathcal{Z}_2(m),
		\eeqq
		where
		\beqq
			\mathcal{Z}_1(m) = \int_{(0,m)^{4n}} \psi^r(\vec w-\vec z)\psi^q(\vec x-\vec w)\psi^{q-r}(\vec z-\vec y)d\vec xd\vec yd\vec wd\vec z,
		\eeqq
		\beqq
			\mathcal{Z}_2(m) = \int_{(0,m)^{4n}} \psi^r(\vec w-\vec z)\psi^q(\vec x-\vec y)\psi^{q-r}(\vec z-\vec y)d\vec xd\vec yd\vec wd\vec z.
		\eeqq
		Integrating with respect to $\vec x$ and using
		\beqq
			\int_{(0,m)^n} \psi^q(\vec x-\vec w)d\vec x \leq \int_{\mathbb{R}^n} \psi^q(\vec v)d\vec v <
			\infty,
		\eeqq
		then integrating with respect to the remaining coordinates and using  
		\begin{align*}
			\int_{(0,m)^{3n}} \psi^r(\vec w-\vec z)\psi^{q-r}(\vec z-\vec y)&* d\vec yd\vec wd\vec z =\\
			= \int_{(-m,m)^{2n}} &\psi^r\star \psi^{q-r}(\vec w-\vec y)d\vec wd\vec y 
			\leq \int_{\mathbb{R}^n} \psi^r \star \psi^{q-r}(\vec v)d\vec v
		\end{align*}
		we get 
		\beqq
			\left(
			\sum_{\vec{a} \in \pi(q)} d^2_{\vec{a}}
			\right)
			\left(({N_n})^q C\right)^2\mathcal{Z}(m) < \infty,
		\eeqq
		from which it follows that
		\beqq
			{m^{-n}}\|f_q^m\otimes_{q-p}f_q^m\|_{\mathfrak{H}^{\otimes2p}} \leq  \frac{C'}{m^n} .
		\eeqq
		Consequently
		\beqq
			{m^{-n}}\|f_q^m\otimes_{q-p}f_q^m\|_{\mathfrak{H}^{\otimes2p}} \underset{m \to \infty}{\longrightarrow} 0
		\eeqq Thus, we have established that condition {\it{(}}c{\it{)}} of Theorem \ref{theorem:CLTTheoremFirst} is satisfied.
		
		We now turn to the condition  {\it{(}d\it{)}}. We have to show that
		\beqq \label{equn:pointd}
			\sup_{m \geq 1} {m^{-n}}\sum_{q=Q+1}^{\infty}q!\|f^m_q\|^2_{\mathfrak{H}^{\otimes q}} \underset{Q \to \infty}{\longrightarrow} 0.
		\eeqq
		The expression
		${m^{-n}}\sum_{q=Q+1}^{\infty}q!\|f^m_q\|^2_{\mathfrak{H}^{\otimes
		q}}$ is the variance of the tail of the Wiener chaos expansion of
		${m^{-n}}F_{(0,m)^n}[X]$. We need to show that it converges
		uniformly to zero. We have already developed this expansion in the
		previous results, with the only difference that now we have a
		normalization of ${m^{-n}}$,
		\beq
		\label{mn:rob}
			{m^{-n}}\sigma^2_m(Q) &=& \sum_{q=Q}^{\infty}
			\sum_{\vec{a} \in \pi_n(q)}
			\sum_{\vec{b} \in \pi_n(q)}
			d_{\vec{a}}\
			d_{\vec{b}} \\  &&\qquad
			\times \int_{(-m,m)^n}
			\E\left[\prod_{i=1}^{{N_n}} H_{a_i}(Y_i(0))
			\prod_{i=1}^{{N_n}} H_{b_i}(Y_i(\vec \nu))\right]
			\prod_{1\leq k\leq n}(1-\frac{|\nu_k|}{m})
			d\vec \nu.\notag
					\eeq
		
		To show the uniform convergence 
		we write an upper bound $C(Q)$ which is independent of $m$ 
		and vanishes as $Q \to \infty$.
		To construct such a bound we use Lemma 1 in \cite{arcones1994}, which, for completeness, we reproduce.
		\begin{lemma}[\cite{arcones1994}]
				\label{lemma:arcones}
		Let $\vec V $ and $\vec W$ be two zero-mean Gaussian random vectors on $\mathbb{R}^d$, and assume that  $\E V_i V_j = \E W_i W_j = \delta_{ij}$.  Let $h:\mathbb{R}^d\to\mathbb{R}$ have
		 Hermite rank  $r$. (i.e.\ the lowest degree  polynomial appearing in its Hermite expansion has degree $r$.) 
		Write  $\psi_{*}$ for the supremum of the sum of 
		the rows or columns of the covariance matrix  $Cov(\vec V, \vec W)$, and 
		assume that $\psi_{*} < 1$. Then 
		\[
		\left|\E[h(\vec V)- \E h(\vec V)][h(\vec{W})- \E h(\vec{W})] \right| \leq \psi^{r}_{*}\E h^2(\vec V).
		\]
		\end{lemma}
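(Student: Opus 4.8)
The plan is to exploit the Wiener chaos machinery already assembled in Section~\ref{Sec:chaosdec}, realising $h(\vec V)$ and $h(\vec W)$ as functionals living in the \emph{same} Gaussian chaos, so that the cross-covariance decouples degree by degree. First I would expand $h$ in the multivariate Hermite basis attached to the standard (identity) covariance shared by $\vec V$ and $\vec W$, writing $h(\vec x)=\sum_{\vec k} c_{\vec k} H_{\vec k}(\vec x)$ with $H_{\vec k}(\vec x)=\prod_{i=1}^d H_{k_i}(x_i)$ and multi-indices $\vec k=(k_1,\dots,k_d)$. Centering removes the constant term, and the Hermite-rank hypothesis states $c_{\vec k}=0$ whenever $|\vec k|<r$; collecting terms of equal total degree, I set $g\definedas h-\E h=\sum_{q\ge r} g_q$ with $g_q\definedas \sum_{|\vec k|=q} c_{\vec k} H_{\vec k}$.

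Next I would introduce a joint representation through a single isonormal process $\xi$ on a Hilbert space $\mathcal H$, together with elements $\{u_i\},\{v_j\}\subset\mathcal H$ satisfying $\langle u_i,u_j\rangle=\langle v_i,v_j\rangle=\delta_{ij}$ and $\langle u_i,v_j\rangle=R_{ij}\definedas\E V_iW_j$; such elements exist because the joint covariance of $(\vec V,\vec W)$ is positive semidefinite, and then $V_i\eqindist \xi(u_i)$, $W_j\eqindist\xi(v_j)$ jointly. Since the $u_i$ (resp.\ $v_j$) are orthonormal, the product formula \eqref{equn:ItoFormula} gives $g_q(\vec V)=I_q(F_q)$ and $g_q(\vec W)=I_q(G_q)$ for the symmetric kernels $F_q=\sum_{|\vec k|=q}c_{\vec k}\,\textnormal{symm}(u_1^{\otimes k_1}\otimes\cdots\otimes u_d^{\otimes k_d})$ and $G_q$ defined identically with $v$ replacing $u$. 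As all these integrals are taken against the same $\xi$, integrals of different orders are orthogonal, so $\E[g_q(\vec V)g_{q'}(\vec W)]=0$ for $q\ne q'$, and hence $\E[g(\vec V)g(\vec W)]=\sum_{q\ge r}q!\,\langle F_q,G_q\rangle_{\mathfrak H^{\otimes q}}$ while $\E[g_q(\vec V)^2]=q!\,\|F_q\|^2$.

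The heart of the argument is then the degree-wise estimate $|\langle F_q,G_q\rangle|\le\psi_*^q\,\|F_q\|^2$. Because the $u_i$ are orthonormal, $F_q$ has a coefficient tensor $\widehat F_q\in(\real^d)^{\otimes q}$ with $\|F_q\|=\|\widehat F_q\|$, and $G_q$ carries the \emph{same} coefficient tensor in the $v$-system; replacing each $u_i$ by a $v_j$ inserts one factor $\langle u_i,v_j\rangle=R_{ij}$ in each tensor slot, so that $\langle F_q,G_q\rangle_{\mathfrak H^{\otimes q}}=\langle \widehat F_q,R^{\otimes q}\widehat F_q\rangle_{(\real^d)^{\otimes q}}$, where $R^{\otimes q}$ acts as $R$ on each of the $q$ factors. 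By Cauchy--Schwarz this is at most $\|R^{\otimes q}\|_{\mathrm{op}}\|\widehat F_q\|^2=\|R\|_{\mathrm{op}}^q\|\widehat F_q\|^2$, and the spectral norm satisfies $\|R\|_{\mathrm{op}}^2\le\|R\|_1\|R\|_\infty\le\psi_*^2$, since $\psi_*$ is the larger of the maximal absolute row sum and the maximal absolute column sum of $R$. Thus $\|R\|_{\mathrm{op}}\le\psi_*$ and the per-degree bound follows.

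Finally I would assemble the pieces, using $\psi_*<1$ so that $\psi_*^q\le\psi_*^r$ for every $q\ge r$:
\begin{align*}
\left|\E[g(\vec V)g(\vec W)]\right|
&\le \sum_{q\ge r}q!\,\psi_*^q\,\|F_q\|^2
= \sum_{q\ge r}\psi_*^q\,\E[g_q(\vec V)^2] \\
&\le \psi_*^r\sum_{q\ge r}\E[g_q(\vec V)^2]
= \psi_*^r\,\Var(h(\vec V))\le\psi_*^r\,\E[h^2(\vec V)],
\end{align*}
which is the assertion. The main obstacle is exactly the degree-wise step: one must set up the joint chaos representation cleanly enough that the cross-covariance collapses to the factor-wise action of $R$, and then recognise that the bound $\|R\|_{\mathrm{op}}\le\psi_*$ is precisely what the row/column-sum definition of $\psi_*$ delivers; the rest is bookkeeping with orthogonality across chaoses.
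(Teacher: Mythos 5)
Your proof is correct, but there is nothing in the paper to compare it against: the paper does not prove this lemma at all --- it reproduces it verbatim (Lemma 1 of \cite{arcones1994}) ``for completeness'' and uses it as a black box. The meaningful comparison is therefore with Arcones' original argument, which is combinatorial: it runs through the generalized Mehler/diagram formula that this paper invokes elsewhere (in the proof of Lemma \ref{lemma:lemma3}), writing $\E[H_{\vec a}(\vec V)H_{\vec b}(\vec W)]$ for $|\vec a|=|\vec b|=q$ as a sum over nonnegative integer matrices $(d_{ij})$ with prescribed row and column sums of products $\prod_{i,j} r_{ij}^{d_{ij}}$, and then using the row/column-sum quantity $\psi_*$ to control that sum degree by degree. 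You replace this combinatorics with linear algebra: embedding both vectors in a single isonormal process, the level-$q$ cross-covariance becomes the quadratic form $\langle \widehat F_q, R^{\otimes q}\widehat F_q\rangle$ on $(\real^d)^{\otimes q}$, and Cauchy--Schwarz together with $\|R^{\otimes q}\|_{\mathrm{op}}=\|R\|_{\mathrm{op}}^{q}$ and the Schur test $\|R\|_{\mathrm{op}}\leq\sqrt{\|R\|_{1}\|R\|_{\infty}}\leq\psi_*$ gives the degree-wise bound. This buys two things: a short proof that reuses exactly the chaos machinery of Section \ref{Sec:chaosdec}, and in fact a strictly stronger inequality, with $\|R\|_{\mathrm{op}}^{r}$ in place of $\psi_*^{r}$, of which the quoted statement is a corollary. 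Two points you should state explicitly: (i) joint Gaussianity of $(\vec V,\vec W)$ is needed for the joint isonormal representation (it is implicit in the lemma, and automatic in the paper's application, where both vectors are values of the field $Y$ at two points); and (ii) the product formula \eqref{equn:ItoFormula} requires an \emph{orthonormal} family, not merely unit vectors, so it may be applied separately to $\{u_i\}$ and to $\{v_j\}$ but not to mixed products --- your argument respects this, but the distinction deserves a sentence.
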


		Returning  to the proof of Theorem  \ref{theorem:main}, we now apply this lemma with 
\beqq		
	\vec{V}=(Y_1(0),\dots,Y_{N_n}(0)),  \qquad \vec{W}=(Y_1(\vec\nu),\dots,Y_{N_n}(\vec\nu)),
\eeqq
and $h:\mathbb{R}^{N_n}\to\mathbb{R}$ given by		
	\beqq
	h(\vec x) =\prod_{i=1}^{{N_n}} H_{a_i}(x_i).
	\eeqq 
	It is easy to check that
		\beqq
		\psi_{*} \leq K^q\psi^q(\tau)
			\quad \text{and} \quad
			\E h^2 = \sum_{\pi_n(q)}d^2_{\vec{a}}\ \vec{a}!.
		\eeqq
		Furthermore, since $q>0$,  $\E \prod_{i=1}^{{N_n}} H_{a_i}(Y_i) = 0$,  and for $|\vec \tau|$ large enough,  by the assumption on $\psi(\vec \tau)$, we have that $K^q\psi^q(\tau) < 1$.
		
		We now choose arbitrary $s \in \mathbb{R}^{+}$ and split the integral over two domains 
		\begin{align}
			\int_{(-m,m)^n}
			\E&\left[\prod_{i=1}^{{N_n}} H_{a_i}(Y_i(0)) 
			\prod_{i=1}^{{N_n}} H_{b_i}(Y_i(\vec \nu))\right] \prod_{1\leq k\leq n}\left(1-\frac{|\nu_k|}{m}\right)  d\vec \nu 
					\notag       \\
			& = \int_{R^n_0(s)}
			\E\left[\prod_{i=1}^{{N_n}} H_{a_i}(Y_i(0)) 
			\prod_{i=1}^{{N_n}} H_{b_i}(Y_i(\vec \nu))\right] \prod_{1\leq k\leq n}\left(1-\frac{|\nu_k|}{m}\right) d\vec \nu~
			\notag 	      \\&
			\quad +
			\int_{(-m,m)^n\backslash R^n_0(s)}
			\E\left[\prod_{i=1}^{{N_n}} H_{a_i}(Y_i(0)) 
			\prod_{i=1}^{{N_n}} H_{b_i}(Y_i(\nu))\right] \prod_{1\leq k\leq n}\left(1-\frac{|\nu_k|}{m}\right) d\vec \nu.
		\end{align}
		Here $R^n_0(s)$ is $n$-dimensional cube of side length $s$, centered at the origin.
		
				For the first term corresponding to the integral over $R^n_0(s)$, we write 
		\beqq
			\sum_{q=Q}^{\infty} \int_{R^n_0(s)} \E\left[\sum_{\vec{a} \in \pi_n(q)}				      
			d_{\vec{a}}
			\prod_{i=1}^{{N_n}} H_{a_i}(Y_i(0)) 
			\sum_{\vec{b} \in \pi_n(q)}
			d_{\vec{b}}		      
			\prod_{i=1}^{{N_n}} H_{b_i}(Y_i(\vec \nu))\right]
			\prod_{1\leq k\leq n}\left(1-\frac{|\nu_k|}{m}\right)
			d\vec \nu.
		\eeqq
		 Lemma \ref{lemma:lemma3}  implies that the above sum is finite for all  $Q$, and so   it converges to zero as 
		 $Q\to\infty$. Regarding the second term, and reintroducing the summation from \eqref{mn:rob}, consider
		\begin{align}\label{equn:secondPart}
			\sum_{q=Q}^{\infty} 	\int_{(-m,m)^n \backslash R^n_0(s)} \E\Bigg{[}\sum_{\vec{a} \in \pi_n(q)}				      
			d_{\vec{a}}
			\prod_{i=1}^{{N_n}} H_{a_i}(Y_i(0)) 
			\sum_{\vec{b} \in \pi_n(q)}
			d_{\vec{b}}\notag  \prod_{i=1}^{{N_n}}& H_{b_i}(Y_i(\vec \nu))\Bigg{]} \\   
			&\times
			\prod_{1\leq k\leq n}\left(1-\frac{|\nu_k|}{m}\right)
			d\vec \nu.
		\end{align}
		By Lemma \ref{lemma:arcones}, 
		we have
		\beqq 
			\left| 
			\E\left[\sum_{\vec{a} \in \pi_n(q)}				      
			d_{\vec{a}}
			\prod_{i=1}^{{N_n}} H_{a_i}(Y_i(0)) 
			\sum_{\vec{b} \in \pi_n(q)}
			d_{\vec{b}}		      
			\prod_{i=1}^{{N_n}} H_{b_i}(Y_i(\vec \nu))\right]\right| 
			\leq K^q\psi^q(\vec \nu)
			\sum_{\pi_n(q)}d^2_{\vec{a}}\vec{a}!,
		\eeqq
		so we can bound the second integral by
		\begin{multline*}
			\sum_{q=Q}^{\infty} 	\int_{(-m,m)^n \backslash R^n_0(s)} \left| K^q\psi^q(\vec \nu)
			\sum_{\pi_n(q)}d^2_{\vec{a}}\vec{a}!\right|
			\prod_{1\leq k\leq n}\left(1-\frac{|\nu_k|}{m}\right)
			d\vec \nu	 \leq       		\\ \leq
			\sum_{q=Q}^{\infty} 	\int_{(-m,m)^n \backslash R^n_0(s)}K^q\psi^q(\vec \nu)
			\sum_{\pi_n(q)}d^2_{\vec{a}}\vec{a}!		      		  	    				    
			d\vec \nu.	   
		\end{multline*}
		By Lemma \ref{lemma:lemma4} we have $\sum\limits_{\vec{a}\in \pi_n(q)} d^2_{\vec{a}}\vec{a}! < C q^n $, 
		so that 
		\beqq		
			\int_{(-m,m)^n \backslash R^n_0(s)}K^q\psi^q(\vec \nu)
			\left(\sum_{\pi_n(q)}d^2_{\vec{a}}\vec{a}!\right)
			d\vec \nu	    \leq	 \int_{(-m,m)^n \backslash R^n_0(s)}K^q\psi^q(\vec \nu)
			C q^n		      		  	    				    
			d\vec \nu.	 		
		\eeqq
		Due to the assumption that $\psi(\vec \nu) \to 0$, we have,  for $s \in \mathbb{R}^{+}$ large enough, that 
		\beq 
			\psi(\vec \nu \in \mathbb{R}^n \backslash R^n_0(s)) < \varepsilon < \frac{1}{K},
		\eeq	
		which leads to the  bound
		\beqq
			\sum_{q=Q+1}^{\infty}
			C q^n	
			\frac{K^{q}\varepsilon^{q}}{\varepsilon}
			\int_{\mathbb{R}^n} \psi(\vec \nu)d\vec \nu.
		\eeqq
		Since $K \varepsilon < 1$ and 
		\beqq
			\int_{\mathbb{R}^n} \psi(\vec \nu) d\vec \nu < \infty,
		\eeqq
		the integral in \eqref{equn:secondPart} converges to zero uniformly in $m$.
		Overall, we conclude that ${m^{-n}}\sigma^2_m(Q) \underset{Q \to \infty}{\longrightarrow} 0$ uniformly in $m$, which establishes that Condition ${\it(d)}$ is satisfied.
		
		To show {\it(a)} and {\it(b)}, we note hat we have already
		demonstrated that for sufficiently large $s$ the integral over
		$(-m,m)^n \backslash R^n_0(s)$ converges to zero. Then,
		\beqq
			R^m(\vec{a},\vec{b}) \to R(\vec{a},\vec{b}) =
			\int_{\mathbb{R}^n}\vec{a}!\vec{b}!
			\sum_{\substack{
			d_{ij}\geq 0 \\ \sum_i d_{ij}=a_j\\ \sum_j d_{ij}=b_i
			}
			}\prod_{1 \leq i,j \leq {N_n}} \frac{({K}_{ij}(\vec \nu))^{d_{ij}}}{(d_{ij})!}
			d\vec \nu,
		\eeqq
		leading to
		\beq
		\label{sigmapsi:eqn}
			\sigma^2_m \underset{m\to \infty}{\longrightarrow} \sigma^2_\Psi  \definedas \sum_{q=1}^{\infty}u_q,
		\eeq
		where
		\beqq
			u_q = \sum_{\vec{a} \in \pi_n(q)}
			\sum_{\vec{b} \in \pi_n(q)}
			d_{\vec{a}}\
			d_{\vec{b}}
			\vec{a}!\vec{b}!R(\vec{a},\vec{b}).
		\eeqq
		This completes the proof of Theorem \ref{theorem:main}.
	\qed
		
\section{The mean value of upper Euler integral}\label{sec:meanvalue}
\label{mean:sec}
	In this section, we discuss the mean value of the Euler integral.
	As shown by \cite{Bobrowski2011}, the mean value of the Euler integral of Gaussian random field scales
	not by the volume of the domain of integration, as one would
	expect, but according to a one-dimensional measure of the
	domain. Specifically, 
	\beq\label{equn:mean}
		\E[\Psi_M[X]] = -\frac{\mathcal{L}^{X}_1(M)}{\sqrt{2\pi}}
	\eeq
	where $\mathcal{L}^{X}_1(M)$ is the first Lipschitz-Killing
	curvature  of $M$, as  evaluated with respect to the metric induced by the random field $X$. (cf.\ \cite{Adler2007} for definitions.)
	
	In this section, we re-establish this result by direct evaluation of
	the mean value through the Wiener chaos decomposition of the Euler
	integral. To do so, we make use of the next proposition, which can be proven using symmetry considerations.

	\begin{proposition}\label{prop:propDetX}
	Let X be tame Gaussian on $\mathbb{R}^n$, and $n > 1$. Then,
		\beq\label{equn:DetX}
			\E\left[\det(\nabla^2 X) X\right]=0.
		\eeq
	\end{proposition}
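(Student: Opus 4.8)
The plan is to exploit the joint Gaussianity of $\langle X,\nabla^2 X\rangle$ together with the full permutation symmetry of the fourth-order spectral moments. Write $H\definedas\nabla^2 X$ and $c_{ij}\definedas\Cov(X,H_{ij})=\frac{\partial^2\rho}{\partial t_i\partial t_j}(0)$. The first step is a Gaussian integration-by-parts (Stein's identity), applied to the scalar Gaussian $X$ against the polynomial $\det H$ in the jointly Gaussian entries of $H$. Since $\partial\det H/\partial H_{ij}$ is the $(i,j)$ cofactor $\mathrm{cof}_{ij}(H)$, and since $c_{ij}=c_{ji}$ and $\mathrm{cof}_{ij}(H)=\mathrm{cof}_{ji}(H)$ by symmetry of $H$, this yields
\beq
\E[X\det H] \ =\ \sum_{i,j=1}^{n} c_{ij}\,\E[\mathrm{cof}_{ij}(H)],
\eeq
so that the problem reduces to showing that every expected cofactor vanishes.

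The second step records the structural fact that, by stationarity and the commutativity of mixed partial derivatives, the covariance tensor of $H$,
\beq
T_{ijkl}\definedas\E[H_{ij}H_{kl}]\ =\ \frac{\partial^4\rho}{\partial t_i\partial t_j\partial t_k\partial t_l}(0),
\eeq
is invariant under \emph{every} permutation of the four indices; i.e.\ $T_{ijkl}$ depends only on the multiset $\{i,j,k,l\}$. Each $\mathrm{cof}_{ij}(H)$ is, up to sign, the determinant of an $(n-1)\times(n-1)$ submatrix of $H$ whose entries are centred Gaussians with covariances drawn from this fully symmetric tensor, and the hypothesis $n>1$ guarantees these submatrices are nonempty.

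The third step, which is the heart of the argument, is the lemma that the expected determinant of such a submatrix is zero. When $n-1$ is odd this is immediate, the determinant being a product of an odd number of centred jointly Gaussian variables. When $n-1$ is even I would expand $\E[\det]$ over the permutations $\tau$ (bijections from the retained rows to the retained columns) and apply Wick's theorem to each term, obtaining a sum over pairings of the rows of products of factors $T_{a\tau(a)b\tau(b)}$. Fixing a pairing $P$ and the pair $\{a,b\}\in P$ containing the smallest row index, the involution $\tau\mapsto\tau'$ that swaps the two column-targets $\tau(a)\leftrightarrow\tau(b)$ reverses $\mathrm{sgn}(\tau)$ while leaving every $T$-factor unchanged, because by full symmetry the factor for $\{a,b\}$ depends only on the multiset $\{a,b,\tau(a),\tau(b)\}$, which the swap preserves. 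This involution is fixed-point free, so the terms cancel in pairs and the sum vanishes; combining the three steps gives $\E[\det(\nabla^2 X)X]=0$.

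The main obstacle is precisely this even-dimensional cofactor computation. The quick sign reversal $X\mapsto-X$, under which $\det(\nabla^2 X)X\mapsto(-1)^{n+1}\det(\nabla^2 X)X$, disposes of the case of even $n$ at once but gives no information when $n$ is odd, where $n-1$ is even and the relevant determinant no longer has odd Gaussian degree. It is there that the full permutation symmetry of $T$ — rather than mere sign symmetry — must be invoked through the pairing-cancellation involution above.
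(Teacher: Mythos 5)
Your proof is correct, but the comparison here is an unusual one: the paper contains no actual proof of Proposition \ref{prop:propDetX} --- it is stated with only the remark that it ``can be proven using symmetry considerations,'' and no argument follows. Your proposal is a complete working-out of precisely such a symmetry argument, so it is best described as supplying the proof the paper omits rather than as an alternative route. All three of your steps check out. The Gaussian integration-by-parts step is handled correctly for the symmetric matrix $H=\nabla^2 X$: treating the upper-triangular entries as the free Gaussian coordinates, the double appearance of each off-diagonal entry is exactly absorbed by summing $c_{ij}\,\E[\mathrm{cof}_{ij}(H)]$ over \emph{ordered} pairs, using $c_{ij}=c_{ji}$ and $\mathrm{cof}_{ij}(H)=\mathrm{cof}_{ji}(H)$. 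Stationarity together with $\rho\in C^4$ (part (ii) of tameness) indeed gives $\E[H_{ij}H_{kl}]=\frac{\partial^4\rho}{\partial t_i\partial t_j\partial t_k\partial t_l}(0)$, invariant under every permutation of $(i,j,k,l)$ since mixed partials commute. And your sign-reversing involution --- composing $\tau$ with the transposition of the two column-targets of the Wick pair containing the smallest row index, which fixes every $T$-factor by full symmetry and is fixed-point free because $\tau(a)\neq\tau(b)$ --- does annihilate each expected cofactor, which is exactly what is needed for odd $n$, the case that the parity argument $X\mapsto -X$ cannot reach. The role you assign to the hypothesis $n>1$ is also exactly right: for $n=1$ the cofactor degenerates to the constant $1$, and $\E[XX'']=\rho''(0)\neq 0$, so the proposition genuinely fails there; your bookkeeping makes this visible rather than accidental.
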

	
	Recall that in proving the CLT of the previous section we concentrated only on critical points in the interior of the parameter space which contributed to the Euler integral. Now, however, we need to consider all such points, since we are looking at an un-normalised mean, rather than an asymptotic limit.
	
	What is now interesting, and very different to what we saw before, is that 
	  the chaos approach shows that none of the faces of
	dimension different from one  can contribute to $\E[\Psi_M[X]]$, including the interior face. This gives, from this angle at least, some new intuition into the Bobrowski-Borman result. 
	
	To justify this  claim, note that in our chaos expansions the mean values of random variables in a chaos of order greater
	than zero vanish, so that possible contribution to  mean values may come only from the zeroth
	chaos. This, however, is characterized by the coefficients $d_{0\cdots 0|J}$. Let $\{d_{\vec{a}}\}_{\vec{a}}$ be the coefficients of
	the chaos of some face $J$. In
	general, each $d_{\vec{a}}$ in the Wiener chaos expansion for $J$ of dimension $k$ factorizes as $d^{(1)}_{a_{1}\cdots
	a_{k}} d^{(2)}_{a_{k+1}\cdots a_{N(k)}}$, where
	\begin{align}
		d^{(1)}_{_{a_{1}\cdots a_{k}}|J} = \lim_{\sigma \to 0}
		\frac{1}{\vec{a}!}
		\int_{\mathbb{R}^n}
		\phi_{\sigma^2 \mathbf{I}_{k\times k}}\left(\vec V_{\mathcal{I}_{J}}\left(\mathbf{\Lambda}_{(1)}^{(1/2)}\vec{v}\right)\right)
		\mathbbm{1}&_{\left\{ \left\langle  \vec V_{\mathcal{I}_{\perp J}}\left(\mathbf{\Lambda}_{(1)}^{(1/2)}\vec{v}\right), \widetilde{\eta}_{J}\right\rangle \geq 0 \right\}} \notag \\
		&\qquad \qquad \times \prod_{i=1}^{n} H_{a_i}(v_i)\phi(v_i)d \vec v      ,
	\end{align}
        
	\begin{align}
		d^{(2)}_{a_{k+1}\cdots a_{N_k}|J} =
		\int_{\mathbb{R}^{(N_k -k))}}
		\det\left(  \mathbf{M}_{\mathcal{I}_{J}}\left( \mathbf{\Lambda}^{(1/2)}_{(2)} \vec{u}\right)\right)
		\vec  V_{\left\{N_n - n\right\}} &\left(\mathbf{\Lambda}_{(2)}^{(1/2)}\vec{Y}_{(2)}\right) \notag \\
		\times
		&\prod_{i=1}^{N_k -k} H_{a_{n+i}}(u_i)\phi(u_i)
		d \vec u.
	\end{align}
                                        
	Note, we have that $d^{(2)}_{0\cdots 0|J} = \E [ \det(\nabla^2 X_{|J}) X_{|J}]$. Thus, by Proposition
	\ref{prop:propDetX}, the contribution to the mean value of the faces 
	$J,~\dim J>1$ is zero. Since the underlying field is centered it is obvious that the zero dimensional faces, the vertexes, do not contribute to the mean as well. Overall, the conclusion is that only the edges contribute to the mean value of Euler integral. 
	
	To see what this implies in a simple example, take $M = T_n $ with additional assumption of isotropy. Then
	\beq
		d^{(2)}_{0,\ldots,0} =
		\E[\frac{\partial^2 X(\vec{s})}{\partial s_is_i} X(\vec{s})] =  - \frac{\partial^2\rho(0)}{\partial \tau_i \partial\tau_i} = -\lambda_{2},
	\eeq
	where $\lambda_2$ is the second spectral moment of $X$.

	To calculate $d^{(1)}_{0,\ldots,0}$, we consider
	\beq
		\phi_{\sigma I}{(\nabla{{X}_{|J}(\vec{s}))})\det(\nabla^2{X}_{|J}(\vec{s}))X(\vec{s})
		\mathbbm{1}_{\{ \langle \nabla X(\vec{s}), \vec \eta_J\rangle \geq 0 \}}.
		}
	\eeq
	Having summed over all parallel edges and taking expectations, we can eliminate the indicator
	term $\mathbbm{1}_{\{\langle \nabla X(\vec{s}), \eta_J\rangle \geq 0 \}}$, since by stationary we translate everything
	to the same range over $(0,m)$ which yields
	\beq
		\sum_{\{J_i| J_i \text{ parallel to } J\}} \mathbbm{1}_{\{\langle \nabla X(\vec{s}), \vec \eta_{J_i}\rangle \geq 0\}} \equiv 1.
    \eeq
	Consequently, by \eqref{equn:for_mean_calc},
	\beq
		d^{(1)}_{0} =
		\frac{|\lambda_2|^{-(1/2)}}{(2\pi)^{1/2}},
	\eeq
	and since the set of the edges of $T_n$ can be split into $n$ families of parallel edges, we finally have
	\beq
		\E[\Psi_{[0,m]^n}[X]] = d^{(1)}_{0} \times d^{(2)}_{0} = -\frac{|\lambda_2|^{(1/2)}n\times m}{(2\pi)^{1/2}}.
	\eeq
	This is precisely \eqref{equn:mean} for this case.

\bibliographystyle{elsarticle-harv}
\bibliography{refs}
\end{document}